\documentclass[11pt]{amsart}
\usepackage{amssymb}
\usepackage{amsmath}
\usepackage{graphicx}
\usepackage{hyperref}
\usepackage{xcolor}
\usepackage{soul}
\usepackage{tikz}

\newtheorem{thm}{Theorem}[section]
\newtheorem{lem}[thm]{Lemma}
\newtheorem{fact}[thm]{Fact}
\newtheorem{cor}[thm]{Corollary}

\newtheorem{problem}[thm]{Problem}
\newtheorem{Def}[thm]{Definition}
\newtheorem{prop}[thm]{Proposition}
\newtheorem{rem}[thm]{Remark}
\newtheorem{ex}[thm]{Example}

\newcommand{\bdfn}{\begin{Def} \rm}
\newcommand{\edfn}{\end{Def}}

\newcommand{\cross}{\mathbin{\tikz [x=1.4ex,y=1.4ex,line width=.2ex] \draw (0,0)--(1,1) (0,1)--(1,0);}}

\newcommand{\beqa}{\begin{eqnarray*}}
\newcommand{\eeqa}{\end{eqnarray*}}

\newcounter{cnt1}
\newcounter{cnt2}
\newcounter{cnt3}
\newcounter{cnt4}
\newcommand{\blr}{\begin{list}{$($\roman{cnt1}$)$} {\usecounter{cnt1}
 \setlength{\topsep}{0pt} \setlength{\itemsep}{0pt}}}
\newcommand{\blR}{\begin{list}{\Roman{cnt4}.\ } {\usecounter{cnt4}
 \setlength{\topsep}{0pt} \setlength{\itemsep}{0pt}}}
\newcommand{\bla}{\begin{list}{$(\alph{cnt2})$} {\usecounter{cnt2}
 \setlength{\topsep}{0pt} \setlength{\itemsep}{0pt}}}
\newcommand{\bln}{\begin{list}{$($\arabic{cnt3}$)$} {\usecounter{cnt3}
 \setlength{\topsep}{0pt} \setlength{\itemsep}{0pt}}}
\newcommand{\el}{\end{list}}

\begin{document}

\title[\tiny{weak$^*$-weak points of continuity on the state spaces }]{weak$^*$-weak points of continuity on the state spaces}

\author{Saurabh Dwivedi}

\address{Shiv Nadar Institution of Eminence. Gautam Buddha Nagar Delhi NCR, Uttar Pradesh-201314, India}

\email{sd605@snu.edu.in, saurabhdewedi876@gmail.com}

\subjclass[2020]{Primary 46A22, 46B10, 46B25; Secondary 46B20, 46B22. \hfill
\textbf{\today}}

\keywords{Smooth points, State spaces, Weak compactness, \textnormal{w}$^*$-\textnormal{w} PC, Hölder's inequality}
\begin{abstract}
	Let $X$ be Banach space. For $x\in X$ with $\|x\|=1$, we denote the state space by, $S_{x}=\{x^*\in X^*:\|x^*\|=x^*(x)=1\}$. In this paper, we study weak$^*$-weak and weak$^*$-$\|.\|$ points of continuity of the identity map, on the state spaces in the space $\ell^{p}(X)$ for $1<p<\infty$ for a non-reflexive Banach space $X$ and then we use these results to characterize the weak and norm compactness of the state spaces of unit vectors in $\ell^{p}(X)$. In addition, we address an open problem, the characterization of weakly compact state spaces in the space of Bochner-integrable functions $L^{1}(\mu, X)$ (see \cite[Problem~3.18]{SDD}). We also provide a local solution to this problem without any additional assumptions on the Banach space $X$. Motivated by the work of \cite{SMR}, we show that if the set of all weak$^{*}$-weak points of continuity of $L^{1}(\mu, X)_{1}^{*}$ is weakly dense in $L^{1}(\mu, X)_{1}^{*}$, then $X^{*}$ has the Radon–Nikodým property (RNP) (see Theorem~\ref{T312}).

\end{abstract}
\maketitle
\textbf{To appear in Revista de la Real Academia de Ciencias Exactas, Físicas y Naturales. Serie A. Matemáticas.}
\section{Introduction}
Let $X$ be a real or complex Banach space, and let $X^*$ be the dual space of $X$. We denote by $X_1$, the unit ball of $X$. Let $S(X)$ be the unit sphere of $X$. For $x\in S(X)$, let $S_x=\{x^*\in S(X^*):x^*(x)=1\}$ be the \textbf{state space} associated to $x$. By the Hahn-Banach theorem, $S_x$ is always non-empty. Let $K\subseteq X_{1}^*$ be a norm closed and bounded subset of $X_{1}^*$. We recall from \cite{HL93} that a point $x^*\in K$ is said to be a \textbf{\textnormal{w}$^*$-PC} if the identity map $I:(K,\textnormal{w}^*)\rightarrow(K,\|.\|)$ is continuous at $x^*$. Motivated by this, we define that a point $x^*\in K$ is said to be a \textbf{\textnormal{w}$^*$-\textnormal{w} PC} if the identity map $I:(K,\textnormal{w}^*)\rightarrow(K,\textnormal{w})$ is continuous at $x^*$. When $K=S(X^*)$, we refer the reader to \cite{SMR} and \cite{SD} for a detailed study of w$^*$-w PC's of $K$. Here the authors have shown that when X is not a reflexive space, w$^*$-w PC's of $X_{1}^*$ must belong to $S(X^*)$. Classification of certain Banach spaces for which state spaces are weakly compact sets was carried out in \cite{Rao}. If $S_{x}$ is a weakly compact set, then it is easy to see that w-topology and w$^*$-topology coincide on $S_{x}$, and thus $I:(S_{x}, \textnormal{w}^*)\to (S_{x}, \textnormal{w})$ is a continuous map. Conversely, if $I:(S_{x}, \textnormal{w}^*)\to (S_{x}, \textnormal{w})$ is a w$^*$-w continuous map and since $S_{x}$ is always a w$^*$-compact set, it is a weakly compact set. Thus, it is an interesting question to determine points of continuity of the map $I:(S_{x}, \textnormal{w}^*)\to (S_{x}, \textnormal{w})$. Same remark also apply in the case of norm compactness and w$^*$-PC's.
 In our recent work (see, \cite{SDD}), we gave a complete characterization of weakly compact state spaces in some spaces of vector-valued functions. Let $X$ be Banach space and let $1\leq p<\infty$. We define $\ell^{p}(X)=\{(x_{i})_{i\in \mathbb{N}}:x_{i}\in X \text{ and }\sum_{i=1}^{\infty}\|x_{i}\|^{p}<\infty\}$, equipped with the $\ell^{p}$-norm. We now recall from \cite{SDD}:
\begin{thm}\cite[Theorem~3.14]{SDD}\label{3t12}
    Let $ (x_i) \in {\ell^{1}}(X)$ be such that $ \| (x_i) \| = 1 $, and let $ x_{i}\neq 0$ for all $i\in{\mathbb{N}}$. Then $ S_{(x_i)}$ is weakly compact in $ {{\ell^{\infty}}}(X_{}^*) $ if and only if $ S_{\frac{x_i}{\|x_i\|}} $ is weakly compact in $X^*$ for each $i\in{\mathbb{N}}$ and $\textup{diam}\left(S_{\frac{x_i}{\|x_i\|}}\right) \to 0 $.
\end{thm}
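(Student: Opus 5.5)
First I will describe $S_{(x_i)}$ explicitly. Write $u_i=x_i/\|x_i\|$ and $\lambda_i=\|x_i\|>0$, so $\sum_i\lambda_i=1$. If $(x_i^*)\in\ell^\infty(X^*)$ has $\sup_i\|x_i^*\|=1$ and $\sum_i x_i^*(x_i)=1$, then
\[
1=\sum_i \mathrm{Re}\,x_i^*(x_i)\le\sum_i\|x_i^*\|\lambda_i\le 1 .
\]
Since every $\lambda_i>0$, equality forces $x_i^*(u_i)=1$ and $\|x_i^*\|=1$ for each $i$. Hence
\[
S_{(x_i)}=\prod_i S_{u_i}\subseteq\ell^\infty(X^*).
\]
Each coordinate map $P_i:\ell^\infty(X^*)\to X^*$ is bounded and linear, so it is weak-weak continuous.

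For necessity, suppose $S_{(x_i)}$ is weakly compact. Then $S_{u_i}=P_i(S_{(x_i)})$ is weakly compact for every $i$. Suppose, for contradiction, that $\mathrm{diam}(S_{u_i})\not\to 0$. Then there are $\varepsilon>0$, an infinite set $N\subseteq\mathbb N$, and points $a_i,b_i\in S_{u_i}$ for $i\in N$ with $\|a_i-b_i\|>\varepsilon$. For any $A\subseteq N$, define $z_A\in S_{(x_i)}$ by taking $a_i$ in coordinates $i\in A$ and $b_i$ elsewhere; outside $N$, pick any fixed element of $S_{u_i}$.

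The goal is to contradict Eberlein--Šmulian. The differences $z_A-z_B$ contain $\ell^\infty$-sums $(a_i-b_i)\chi_{A\triangle B}$ whose coordinates have norm at least $\varepsilon$. Choose norming functionals $\phi_i\in X^{**}$ with $\|\phi_i\|=1$ and $\phi_i(a_i-b_i)>\varepsilon$. Take a free ultrafilter $\mathcal U$ on $N$ and define $\Phi\in\ell^\infty(X^*)^*$ by
\[
\Phi((y_i))=\lim_{\mathcal U}\phi_i(y_i).
\]
Now take a sequence $z_n=z_{A_n}$, where $A_n=N\cap[n,\infty)$. Any weak cluster point $z$ of $(z_n)$ lies in $S_{(x_i)}$, since that set is weakly closed. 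Coordinatewise, $z_n\to (b_i)$ weak$^*$ and also coordinatewise weakly, because $P_i(z_n)=b_i$ eventually. So the only possible weak cluster point is $z=(b_i)$. But
\[
\Phi(z_n)=\lim_{\mathcal U}\phi_i(a_i)\quad\text{for all }n,\qquad \Phi(z)=\lim_{\mathcal U}\phi_i(b_i),
\]
and these differ by at least $\varepsilon$. This contradicts the existence of a weak cluster point, which weak compactness guarantees.

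For sufficiency, suppose each $S_{u_i}$ is weakly compact and $\mathrm{diam}(S_{u_i})\to0$. Fix $b_i\in S_{u_i}$ and write
\[
S_{(x_i)}=(b_i)+\prod_i (S_{u_i}-b_i).
\]
The set $\prod_{i\le n}(S_{u_i}-b_i)$, placed in the first $n$ coordinates, is weakly compact, since it is a finite product of weakly compact sets in $\bigoplus_{i\le n}X^*$ embedded isometrically into $\ell^\infty(X^*)$. Every element of the full product lies within $\sup_{i>n}\mathrm{diam}(S_{u_i})\to0$ of that finite piece. So $S_{(x_i)}-(b_i)$ is, for every $\varepsilon>0$, contained in a weakly compact set plus $\varepsilon$ times the unit ball. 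By Grothendieck's lemma, it is relatively weakly compact. Moreover $S_{(x_i)}$ is weak$^*$ closed, hence norm closed and convex, hence weakly closed. Therefore $S_{(x_i)}$ is weakly compact.

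I expect the main obstacle to be the necessity of $\mathrm{diam}\to0$. The delicate point is justifying that $\Phi$ separates $(z_n)$ from its only candidate limit; the ultrafilter functional on $\ell^\infty(X^*)$ makes this work.
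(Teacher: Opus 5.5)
Your proof is correct and complete. The paper does not prove this statement; it quotes it from \cite{SDD}. The natural comparison is therefore with the paper's own argument for the $\ell^{p}$ analogue, Corollary~\ref{C321} (which relies on Theorem~\ref{T4}).

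\textbf{Identifying the state space.} Both arguments start by pinning down the coordinates of a state.
\begin{itemize}
\item For $1<p<\infty$, Lemma~\ref{L3} and Remark~\ref{R24} fix each $\|x_i^*\|$ and place $x_i^*/\|x_i^*\|$ in $S_{x_i/\|x_i\|}$.
\item For $p=1$, your equality-case argument (which uses $x_i\neq 0$ for all $i$) gives the exact product $S_{(x_i)}=\prod_i S_{u_i}$.
\end{itemize}

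\textbf{Sufficiency.} The paper works with nets. It passes to a w$^*$-convergent subnet and uses that w and w$^*$ agree on each weakly compact $S_{u_i}$ to get coordinatewise weak convergence. It then controls the tail by H\"older's inequality. That last step needs every functional on $\ell^{q}(X^*)$ to be some $(x_i^{**})\in\ell^{p}(X^{**})$, and this has no counterpart on $\ell^\infty(X^*)$, whose dual is much larger than $\ell^1(X^{**})$.

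Your use of Grothendieck's lemma avoids having to know that dual. It replaces the H\"older tail estimate with the norm bound $\sup_{i>n}\operatorname{diam}(S_{u_i})$, which is exactly where the diameter hypothesis enters. A net version in the paper's style would also work: split $z_\alpha-z$ into a finitely supported head, which converges weakly, and a tail of norm at most $\sup_{i>n}\operatorname{diam}(S_{u_i})$.

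\textbf{Necessity of $\operatorname{diam}(S_{u_i})\to 0$.} This part has no $\ell^{p}$ analogue, and your ultrafilter functional $\Phi$ is the right tool for it. $\Phi$ ignores every finite set of coordinates, so it detects the non-vanishing tails $(a_i-b_i)\chi_{A_n}$ that functionals coming from $\ell^1(X^{**})$ cannot see. This is exactly the mechanism behind the distinction between $p=1$ and $1<p<\infty$ that the paper points out in its introduction.

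\textbf{A minor remark.} Eberlein--\v{S}mulian is not actually needed. You only use that a sequence in a weakly compact set has a weak cluster point.
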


\begin{prop}\cite[Proposition~3.16]{SDD}\label{3c4}
Let $ (x_i) \in {{\ell^{1}}}(X_{})$ with $ \| (x_i) \| = 1 $, and $ x_{i}\neq 0$ for all $i\in{\mathbb{N}}$. Then $S_{(x_i)}$ is norm compact in $ {{\ell^{\infty}}}(X_{}^*) $ if and only if $S_{\frac{x_i}{\|x_i\|}} $ is norm compact in $X_{}^*$ for each $i\in{\mathbb{N}}$ and $ \textup{diam}\left(S_{\frac{x_i}{\|x_i\|}}\right) \to 0 $.
\end{prop}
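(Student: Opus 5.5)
The plan is to identify $S_{(x_i)}$ with a product of scaled state spaces and then argue the two directions separately. Put $u_i = x_i/\|x_i\|$ and $\lambda_i=\|x_i\|>0$, so $\sum_i\lambda_i=1$.

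\textbf{Step 1: the product structure.} For $(x_i^*)\in\ell^\infty(X^*)$ with $\|(x_i^*)\|_\infty\le 1$ we have
\[
1=\sum_i x_i^*(x_i)\le \sum_i \|x_i^*\|\,\lambda_i\le 1 .
\]
Since every $\lambda_i>0$, equality forces $x_i^*(u_i)=1$ for each $i$. Hence
\[
S_{(x_i)}=\prod_i S_{u_i}\subseteq \ell^\infty(X^*),
\]
and conversely every such sequence is a norm-one functional attaining its norm at $(x_i)$.

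\textbf{Step 2: necessity.} Each coordinate projection $P_i:\ell^\infty(X^*)\to X^*$ is norm-continuous and linear, so it maps the norm compact set $S_{(x_i)}$ onto the norm compact set $S_{u_i}$.

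For the diameters, suppose $\textup{diam}(S_{u_i})\not\to 0$. Then there are $\varepsilon>0$, infinitely many indices $i_k$, and pairs $a_k,b_k\in S_{u_{i_k}}$ with $\|a_k-b_k\|>\varepsilon$. Fix any $c=(c_i)\in\prod_i S_{u_i}$. For each $k$, let $y^{(k)}$ be the sequence $c$ with its $i_k$-th coordinate replaced by $a_k$, and let $z^{(k)}$ be $c$ with that coordinate replaced by $b_k$.

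By compactness, pass to a subsequence along which $y^{(k)}\to y$ and $z^{(k)}\to z$ in norm. Since $i_k\to\infty$, for each fixed $i$ we eventually have $y^{(k)}_i=c_i$. Coordinatewise limits therefore give $y=z=c$. This contradicts
\[
\|y^{(k)}-z^{(k)}\|_\infty\ge \|a_k-b_k\|>\varepsilon .
\]

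\textbf{Step 3: sufficiency.} Let $(x^{*(n)})$ be a sequence in $S_{(x_i)}$. By compactness of each $S_{u_i}$ and a diagonal argument, extract a subsequence such that $x^{*(n)}_i\to y_i\in S_{u_i}$ in norm for every $i$. Then $y=(y_i)\in S_{(x_i)}$.

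Given $\varepsilon>0$, choose $N$ with $\textup{diam}(S_{u_i})<\varepsilon$ for all $i>N$. For those $i$, $\|x^{*(n)}_i-y_i\|<\varepsilon$ automatically, because both lie in $S_{u_i}$. For the finitely many $i\le N$, the coordinatewise convergence gives $\|x^{*(n)}_i-y_i\|<\varepsilon$ for large $n$. Thus $x^{*(n)}\to y$ uniformly in the $\ell^\infty$ norm, which shows $S_{(x_i)}$ is norm compact.

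\textbf{Main obstacle.} The delicate point is the sufficiency step. Mere compactness of each factor does not give compactness of the product in the $\ell^\infty$ norm; the uniform smallness of the tails from $\textup{diam}(S_{u_i})\to 0$ is exactly what rescues this. The step also relies on the hypothesis $x_i\neq 0$ for all $i$, which is used in Step 1 to force every coordinate into the state space $S_{u_i}$.
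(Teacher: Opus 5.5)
Your proof is correct. The paper does not prove this statement; it only quotes it from \cite{SDD}. The natural comparison is therefore with the paper's treatment of the $\ell^p$ analogue in Theorem~\ref{T4'} and Corollary~\ref{C321'}.

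The paper's argument there runs on nets, in four steps:
\begin{itemize}
\item it uses w$^*$-compactness of $S_{(x_i)}$ to extract a w$^*$-convergent subnet;
\item it uses Lemma~\ref{L3} and Remark~\ref{R24} to place the normalized coordinates in $S_{\frac{x_i}{\|x_i\|}}$;
\item it upgrades coordinatewise w$^*$-convergence to norm convergence via compactness of the factors;
\item it controls the tail through $\sum_{i>M}\|x_i^*\|^q<\frac{\epsilon}{2^{q+1}}$.
\end{itemize}

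Your Step~1 is the $\ell^1$ counterpart of Lemma~\ref{L3} and Remark~\ref{R24}: equality in $\sum_i \mathrm{Re}\,x_i^*(x_i)\le\sum_i\|x_i\|$ forces every coordinate into $S_{u_i}$. You also record the reverse inclusion, giving $S_{(x_i)}=\prod_i S_{u_i}$, and this buys you two things.
\begin{itemize}
\item Sufficiency can be done with sequences and a diagonal argument, with no appeal to w$^*$-compactness. This is legitimate because the norm topology is metrizable.
\item Necessity of $\textup{diam}\to 0$ follows from replacing a single coordinate of a fixed state. This is the same device the paper uses in the necessity part of Theorem~\ref{T4}.
\end{itemize}

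Your argument also shows clearly why the diameter condition appears for $p=1$ but not for $1<p<\infty$. In $\ell^\infty(X^*)$ every coordinate of a state has norm one, so the tail can only be made small through the diameters. In $\ell^q(X^*)$ the weights $\|x_i^*\|^q=\|x_i\|^p$ are summable.

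Three small points of rigor:
\begin{itemize}
\item In the complex case, run Step~1 with real parts. The identity $\mathrm{Re}\,x_i^*(u_i)=1$ together with $|x_i^*(u_i)|\le 1$ gives $x_i^*(u_i)=1$.
\item In Step~2, choose the $i_k$ strictly increasing. Take the convergent subsequence for $z^{(k)}$ inside the one for $y^{(k)}$.
\item Surjectivity of $P_i$ onto $S_{u_i}$ uses that every factor is nonempty, which follows from Hahn--Banach.
\end{itemize}
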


In the present article, we provide a characterization of weakly compact and norm compact state spaces in the space $\ell^{p}(X)$ for $1<p<\infty$, when $X$ is a non-reflexive Banach space. Interestingly, in this case, it is not required that the diameters of the coordinatewise state spaces tend to $0$. This marks an important distinction between the cases $p=1$ and $1<p<\infty$ (see Corollary~2.6 and Corollary~2.9). 

Let $(\Omega, \Sigma, \mu)$ be a measure space, where $\mu$ is a non-atomic probability measure. We use $L^p(\mu, X)$ to denote the space of all (equivalent classes of) $X$-valued \textbf{Bochner $p$-integrable} functions, equipped with the norm $\|f\|=\left(\int_{\Omega}\|f(\omega)\|^{p}d(\mu(\omega))\right)^{\frac{1}{p}}$ for $f\in L^p(\mu, X)$. We refer the reader to \cite{DU} for more information on this topic. Let $\mu$ be a finite measure and suppose that $X^*$ has the Radon-Nikodym property (RNP) with respect to $\mu$. By \cite[Theorem~IV.1.1]{DU}, we then have $L^{p}(\mu, X)^{*} = L^{q}(\mu, X^{*})$, where $1<p\leq q<\infty$ are such that $\frac{1}{p}+\frac{1}{q}=1$ and $L^1(\mu, X)^*=L^{\infty}(\mu, X^*)$, the space of all $X^*$-valued strongly measurable functions that are essentially bounded. More generally, if $X$ is any Banach space, we still have the canonical embedding $L^{q}(\mu, X^*)\subseteq L^p(\mu, X)^*$. Let $A \subseteq \Omega$ be a measurable subset of positive measure, and set $\Sigma' = \Sigma|_{A}$ (the restricted $\sigma$-algebra) and $\lambda = \mu|_{A}$.  
If $X$ has the RNP with respect to $\mu$, then it is easy to see that $X$ also has the RNP with respect to $\lambda$.
  Let $K\subseteq L^{q}(\mu, X^*)$. Then note that $K$ is weakly compact in $L^{q}(\mu, X^*)$ if and only if it is weakly compact in $L^p(\mu, X)^*$. Let $\Omega = [0,1]$, and let $\mu$ denote the Lebesgue measure on the Lebesgue $\sigma$-algebra of $[0,1]$.
 The following theorem established in \cite{SDD}, characterizes norm compactness of the state spaces in $L^{1}([0,1], X)$.
  
	\begin{thm}\label{Th316}
    Assume $X^*$ is separable. Let $f\in L^{1}([0,1],X)$ be such that $\|f\|=1$, and $f(t)\neq0$ a.e. Then $S_f$ is norm compact if and only if $f$ is smooth.
\end{thm}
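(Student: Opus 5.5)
The plan is to use the identification $L^{1}([0,1],X)^{*}=L^{\infty}([0,1],X^{*})$. This holds because a separable dual space has the RNP. With it, the state space $S_f$ can be described pointwise. We then use the non-atomicity of Lebesgue measure to build an $\varepsilon$-separated sequence inside $S_f$ whenever $S_f$ has two distinct points.

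First I record the pointwise description of $S_f$. Let $g\in L^{\infty}([0,1],X^{*})$ with $\|g\|_\infty\le 1$. Then
\[
g\in S_f \iff \int_0^1 g(t)(f(t))\,dt=\int_0^1\|f(t)\|\,dt=1 .
\]
Since $\operatorname{Re} g(t)(f(t))\le \|f(t)\|$ a.e., the integral condition is equivalent to $g(t)(f(t))=\|f(t)\|$ a.e. Using $f(t)\neq 0$ a.e., this says exactly that $g(t)\in S_{f(t)/\|f(t)\|}$ for a.e. $t$.

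A consequence is the \emph{mixing property}. If $g_1,g_2\in S_f$ and $B\subseteq[0,1]$ is measurable, then $g_1\chi_B+g_2\chi_{[0,1]\setminus B}\in S_f$. Indeed, this function is strongly measurable, has $\|\cdot\|_\infty\le 1$, and satisfies the pointwise identity a.e.

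The direction "$f$ smooth $\Rightarrow$ $S_f$ norm compact" is immediate, since $S_f$ is then a singleton.

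For the converse, suppose $f$ is not smooth, and pick $g_1\neq g_2$ in $S_f$. Because they differ as elements of $L^\infty$, there is some $\varepsilon>0$ for which the measurable set $B=\{t:\|g_1(t)-g_2(t)\|>\varepsilon\}$ has positive measure. The function $t\mapsto\|g_1(t)-g_2(t)\|$ is measurable since $g_1-g_2$ is strongly measurable.

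Lebesgue measure is non-atomic, so we can partition $B$ into pairwise disjoint measurable sets $B_1,B_2,\dots$, each of positive measure. Define $h_n=g_1\chi_{B_n}+g_2\chi_{[0,1]\setminus B_n}$. By the mixing property each $h_n$ lies in $S_f$. For $n\neq m$, on $B_n$ we have $h_n-h_m=g_1-g_2$, whose norm exceeds $\varepsilon$ on a set of positive measure. Hence $\|h_n-h_m\|_\infty\ge\varepsilon$, so $(h_n)$ has no norm-Cauchy subsequence. Therefore $S_f$ is not norm compact.

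The main point to check carefully is the first step: that every element of $L^1([0,1],X)^*$ is represented by some $g\in L^\infty(X^*)$, so that $S_f$ really consists of such functions. This is where separability of $X^*$ enters, through the RNP and \cite[Theorem~IV.1.1]{DU}. Once this representation is available, the separated sequence requires no measurable-selection argument, because the two given distinct states $g_1,g_2$ already provide the pointwise choices.
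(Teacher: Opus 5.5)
Your proof is correct, but it takes a different route from the paper. The paper does not reprove this statement; it quotes it from earlier work. It then recovers it, for an arbitrary non-atomic probability measure, as a special case of Theorem~\ref{T36} and Corollary~\ref{C37}.

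\textbf{The paper's route.} The paper proves the stronger fact that \emph{weak} compactness of $\tilde{S}_f$ already forces $\tilde{S}_f$ to be a singleton. It then gets the norm-compact case from the trivial implication ``norm compact $\Rightarrow$ weakly compact''. Its argument takes $h_1\neq h_2$ in $\tilde{S}_f$. It first reduces to the case $\mu(A)=1$, where $A=\{\|h_1-h_2\|\ge\epsilon\}$, by restricting to $A$ and renormalizing $f$. It then mixes along \emph{increasing} sets $Z_1\cup\dots\cup Z_n$, so the mixtures $h_n'$ converge weak$^*$ to $h_1$. Weak compactness upgrades this to weak convergence. Mazur's theorem then puts $h_1$ in the norm-closed convex hull of $\{h_n'\}$. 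This is impossible, because every convex combination agrees with $h_2$ on some $Z_m$ of positive measure.

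\textbf{Your route.} You use the same mixing property, but with \emph{disjoint} pieces $B_n$, and obtain an $\varepsilon$-separated sequence in $S_f$ directly. This avoids weak$^*$ limits, Mazur's theorem and the reduction to full measure. The hypothesis $f(t)\neq 0$ a.e.\ plays no essential role in the nontrivial direction.

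\textbf{What each buys.} Your argument only rules out norm compactness. The perturbations $h_n-g_2=(g_1-g_2)\chi_{B_n}$ are disjointly supported with norms between $\varepsilon$ and $2$. So they behave like the unit vector basis of $c_0$, and in fact $h_n\to g_2$ weakly. Your sequence therefore cannot yield the weak-compactness version, which is what the paper needs to answer the problem on weakly compact state spaces. For the statement as posed, your route is the shorter and more elementary one. In both arguments, separability of $X^*$ is used only to identify $L^1([0,1],X)^*$ with $L^\infty([0,1],X^*)$ via the RNP.
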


We address the following problem that remains unanswered in \cite{SDD}.

\begin{problem}\cite[Problem~3.18]{SDD}\label{P1}
Let $f \in S(L^{1}([0,1],X))$. It is not known under what conditions the set $S_{f}$ is weakly compact.

\end{problem}

In this paper, we provide a complete answer to this question. More precisely, let $f\in S(L^{1}(\mu, X))$. We will characterize the weak compactness of the state space $S_{f}$ under the assumption that $S_{f}\cap L^{\infty}(\mu, X^*)$ is non-empty (see, Theorem~\ref{T36}).

Let $X$ be a space such that $X=Y\bigoplus_{\ell^{p}}Z$ ($\ell^{p}$-sum, $1\leq p<\infty$) for some closed subsapce $Z$ of $X$. By duality, we can write $X^*=Y^{*}\bigoplus_{\ell^{q}}Z^{*}$. Let $(y, 0)\in S(X)$. We denote by $S_{(y, 0)}^{Y^*}$, the set $\{(y^*, 0)\in X_{1}^*:(y^*, 0)(y, 0)=1\}$. We refer to the set $S_{(y, 0)}^{Y^{*}}$ as the state space of $(y, 0)$ in $Y^{*}$. In section~1, we start with Proposition~\ref{P2}, where we will show that a point $(y^*, 0)\in S_{(y, 0)}^{Y^*}$ is a w$^*$-w PC if and only if it is a w$^*$-w PC in $S_{(y, 0)}$. We present our main results as Theorem~\ref{T4} and Theorem~\ref{T4'}, where we will characterize w$^*$-w and w$^*$-$\|.\|$ PC's of the state spaces in the space $\ell^{p}(X)$. Finally, we conclude this section with two corollaries to our main theorems, namely, Theorem~\ref{T4} and Theorem~\ref{T4'} concerning the weak compactness and norm compactness of the state spaces in $\ell^{p}(X)$. In section~2, we address Problem~\ref{P1} and provide a complete characterization for weakly compact state spaces in the space $L^{1}(\mu, X)$. Let $A$ be a set, we denote by $A^c$, the set-theoretic complement of $A$.

 \section{On the state spaces of $\ell^{p}(X)$}
Let us begin with the following proposition. Throughout this section, we assume $1 < p < \infty$, and let $q$ denote the conjugate exponent of $p$, that is, $\frac{1}{p}+\frac{1}{q}=1$.
\begin{prop}\label{P2}
   Let $Y\subseteq X$ such that $X=Y\bigoplus_{\ell^{p}}Z$. If $(y, 0)\in S(X)$, then $S_{(y, 0)}^{Y^*}=S_{(y, 0)}$. In particular, $(y^*, 0)\in S_{(y, 0)}^{Y^*}$ is a \textnormal{w}$^*$-\textnormal{w} PC if and only if $(y^*, 0)\in S_{(y, 0)}$ is a \textnormal{w}$^*$-\textnormal{w} PC. 
\end{prop}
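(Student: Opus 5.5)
The plan is to prove the set equality $S_{(y,0)}^{Y^*}=S_{(y,0)}$ by the equality case of Hölder's inequality, and then read off the statement about points of continuity as a triviality.

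The inclusion $S_{(y,0)}^{Y^*}\subseteq S_{(y,0)}$ is immediate. An element $(y^*,0)\in X_1^*$ with $(y^*,0)(y,0)=1$ has norm at most $1$ and attains the value $1$ at the unit vector $(y,0)$. Hence its norm is exactly $1$, and it lies in $S_{(y,0)}$.

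For the reverse inclusion, I take $(y^*,z^*)\in S_{(y,0)}$, using the duality $X^*=Y^*\bigoplus_{\ell^q}Z^*$, so that
\[
\|y^*\|^q+\|z^*\|^q=1 \quad\text{and}\quad y^*(y)=1 .
\]
Since $\|(y,0)\|=\|y\|=1$, we get $1=y^*(y)\le \|y^*\|\,\|y\|=\|y^*\|\le 1$. Therefore $\|y^*\|=1$. Because $q<\infty$, the relation $\|y^*\|^q+\|z^*\|^q=1$ then forces $\|z^*\|=0$, so $z^*=0$. Hence $(y^*,z^*)=(y^*,0)$ belongs to $S_{(y,0)}^{Y^*}$.

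This is the only step with any content, and it is exactly where the standing assumption $1<p<\infty$ enters: it makes $q$ finite and the dual norm strictly convex in the splitting. For $p=1$ the dual would be an $\ell^\infty$-sum, and $z^*$ could then be any element of $Z_1^*$, so the equality would fail. I would also remark that the pairing $(y^*,z^*)(y,z)=y^*(y)+z^*(z)$ is the standard identification and needs no further justification.

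For the ``in particular'' part, the two sets coincide as subsets of $X^*$. Each carries the w$^*$ and w topologies inherited from $X^*$, which are the same relative topologies on the same set. So the identity map $I:(K,\textnormal{w}^*)\to(K,\textnormal{w})$ is literally the same map in both cases, and continuity at $(y^*,0)$ in one is continuity in the other. I should check that ``w$^*$-w PC of $S_{(y,0)}^{Y^*}$'' means the topologies are taken relative to $X^*$, not to $Y^*$. If $Y^*$ topologies are intended instead, I would add that the embedding $y^*\mapsto (y^*,0)$ is a w$^*$-w$^*$ and w-w homeomorphism onto its image, because $Y^*$ is a complemented summand and its functionals extend by zero on $Z$.
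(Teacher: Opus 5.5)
Your proof is correct and follows the paper's argument. You write a state as $(y^*,z^*)\in Y^*\bigoplus_{\ell^q}Z^*$, get $\|y^*\|=1$ from $1=y^*(y)\le\|y^*\|\le 1$, and conclude $z^*=0$ from $\|y^*\|^q+\|z^*\|^q=1$; the reverse inclusion is trivial. Your opening mention of the equality case of H\"older's inequality is not actually used, since only the elementary norm bound and $q<\infty$ are needed. Your remarks that the equality fails for $p=1$ and that the relative w$^*$ and w topologies agree are sound additions that the paper leaves implicit.
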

\begin{proof}
   Since $X=Y\bigoplus_{\ell^{p}}Z$, by duality, we can write $X^*=Y^{*}\bigoplus_{\ell^{q}}Z^{*}$. Suppose $x^*\in S_{(y, 0)}$ such that $x^*=(y^*, z^*$), where $y^*\in Y^{*}, z^*\in Z^{*}$ and $1=\|x^*\|^{q}=\|y^*\|^{q}+\|z^*\|^{q}.$ We have $(y^*, z^*)(y, 0)=y^*(y)=1.$ Since $\|y^*\|\leq 1$, it gives that $\|y^*\|=1.$ This implies that $\|z^*\|=0$. So we get $x^*=(y^*, 0)$ and $(y^*, 0)(y, 0)=1$. Thus, $S_{(y, 0)}\subseteq S_{(y, 0)}^{Y^*}$. It is easy to check the other inclusion. This completes the proof.
\end{proof}

The following corollary is an immediate consequence of the above proposition. It relates to the norm and weak compactness of the state spaces.
\begin{cor}\label{R2}
    Let $X$ be such that $X=Y\bigoplus_{\ell^{p}}Z$. We have that $S_{(y, 0)}^{Y^*}$ is weakly compact in $Y^*$ if and only if $S_{(y, 0)}$ is weakly compact in $X^*$. Similarly, we have $S_{(y, 0)}^{Y^*}$ is norm compact in $Y^*$ if and only if $S_{(y, 0)}$ is norm compact in $X^*$ 
\end{cor}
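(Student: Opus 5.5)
The plan is to reduce the corollary to Proposition~\ref{P2}, which says that $S_{(y, 0)}^{Y^*}=S_{(y, 0)}$ as subsets of $X^*=Y^{*}\bigoplus_{\ell^{q}}Z^{*}$. What remains is to compare weak and norm compactness of this single set when it is viewed inside $Y^*$ and inside $X^*$. We identify $Y^*$ with the closed subspace $Y^*\oplus\{0\}$ of $X^*$ through the map $J:y^*\mapsto (y^*,0)$. Under this identification, $S_{(y,0)}^{Y^*}$ corresponds to the set $\{y^*\in Y^*: \|y^*\|\le 1,\ y^*(y)=1\}$, which is the state space of $y$ in $Y^*$.

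For norm compactness, observe that $\|(y^*,0)\|=(\|y^*\|^q+0)^{1/q}=\|y^*\|$. Hence $J$ is a linear isometry of $Y^*$ onto the closed subspace $Y^*\oplus\{0\}$. An isometry preserves norm compactness in both directions, and compactness of a set does not depend on the ambient space into which it is isometrically embedded. Combining this with $J(S^{Y^*}_{(y,0)})=S_{(y,0)}$ from Proposition~\ref{P2} gives the norm statement.

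For weak compactness, the key step is to show that $J$ is a homeomorphism onto its range from $(Y^*,\mathrm{w})$ to $(X^*,\mathrm{w})$. I would argue this in three steps.
\begin{itemize}
\item $J$ is bounded and linear, so it is weak-to-weak continuous.
\item The projection $P:X^*\to Y^*$, $(y^*,z^*)\mapsto y^*$, is also bounded and linear, hence weak-to-weak continuous.
\item $P\circ J=\mathrm{id}_{Y^*}$, so $J^{-1}=P|_{J(Y^*)}$ is continuous on the range.
\end{itemize}
It follows that a subset $K\subseteq Y^*$ is weakly compact in $Y^*$ if and only if $J(K)$ is weakly compact in $X^*$. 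Alternatively, one can use that the weak topology of a closed subspace coincides with the restriction of the ambient weak topology, which follows from Hahn--Banach. Taking $K=S^{Y^*}_{(y,0)}$ and applying Proposition~\ref{P2} finishes the proof.

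The only point needing any care is this transfer of the weak topology between $Y^*$ and the subspace $Y^*\oplus\{0\}\subseteq X^*$. The projection argument handles it directly, so I expect no real obstacle. The $\ell^q$-structure of the dual is used only through Proposition~\ref{P2}, which shows that every state of $(y,0)$ has vanishing $Z^*$-component.
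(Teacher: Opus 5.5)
Your proposal is correct and takes essentially the paper's approach: the paper simply calls the corollary an immediate consequence of Proposition~\ref{P2}. Your argument supplies the omitted detail, namely that $y^*\mapsto(y^*,0)$ is a linear isometry of $Y^*$ onto $Y^*\oplus\{0\}$, and that this map together with the coordinate projection transfers both weak and norm compactness in each direction.
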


Let $\Theta:\ell^{q}(X^*) \to \ell^{q}$ be the map defined by $\Theta\big((x_{i}^*)\big) = \big(\|x_{i}^*\|\big).$ We now present a lemma showing that the image of a state space under the map $\Theta$ is a singleton. This result plays a crucial role throughout the article. Before stating the lemma, we recall the \textbf{Hölder's inequality}.
 Let $(\alpha_{i})\in \ell^{p}$ and $(\beta_{i})\in \ell^{q}$, we have that $(\alpha_{i}\beta_{i})\in \ell^{1}$ and $\sum_{i=1}^{\infty}|\alpha_{i}\beta_{i}|\leq \|(\alpha_{i})\|_{p}\|(\beta_{i})\|_{q}$. The equality holds true if and only if there exists a scalar $\lambda\in\mathbb{C}$ such that $|\alpha_{i}|^{p}=\lambda|\beta_{i}|^{q}$ for each $i\in \mathbb{N}$.
\begin{lem}\label{L3}
     Let $(x_{i})\in S(\ell^{p}(X))$ and let $(x_{i}^*), (y_{i}^*)\in S_{(x_{i})}$. Then $\|x_{i}^*\|=\|y_{i}^*\|$ for each $i\in \mathbb{N}$.
\end{lem}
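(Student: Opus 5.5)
The plan is to combine the chain of inequalities defining a norming functional with the equality case of Hölder's inequality recalled above. Here $\ell^{p}(X)^*=\ell^{q}(X^*)$ isometrically, via the pairing $(x_i^*)((x_i))=\sum_i x_i^*(x_i)$. Fix $(x_i^*)\in S_{(x_i)}$ and set $\alpha_i=\|x_i\|$ and $\beta_i=\|x_i^*\|$. Then $\|(\alpha_i)\|_p=1$ and $\|(\beta_i)\|_q=1$. We have
\begin{equation*}
1=\sum_{i=1}^{\infty}x_i^*(x_i)\le\sum_{i=1}^{\infty}|x_i^*(x_i)|\le\sum_{i=1}^{\infty}\|x_i^*\|\|x_i\|=\sum_{i=1}^{\infty}\alpha_i\beta_i\le\|(\alpha_i)\|_p\|(\beta_i)\|_q=1.
\end{equation*}
So every inequality in the chain is an equality. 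In particular, equality holds in Hölder's inequality for the nonnegative sequences $(\alpha_i)$ and $(\beta_i)$.

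By the equality case of Hölder's inequality, there is a scalar $\lambda$ with $\alpha_i^{p}=\lambda\beta_i^{q}$ for every $i$. Since neither sequence is zero, we have $\lambda>0$. Summing over $i$ gives $1=\sum_i\alpha_i^p=\lambda\sum_i\beta_i^q=\lambda$, so $\lambda=1$. Hence $\|x_i^*\|=\beta_i=\alpha_i^{p/q}=\|x_i\|^{p-1}$ for every $i$.

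This formula depends only on $(x_i)$ and not on the chosen functional. Applying the same argument to $(y_i^*)$ therefore gives $\|y_i^*\|=\|x_i\|^{p-1}=\|x_i^*\|$ for each $i\in\mathbb{N}$, which is the claim. As a byproduct, the image of $S_{(x_i)}$ under $\Theta$ is the singleton $\{(\|x_i\|^{p-1})\}$.

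The only delicate point is applying the equality case of Hölder correctly. First, $\lambda$ must be normalized using $\|(\alpha_i)\|_p=\|(\beta_i)\|_q=1$; the vanishing case is harmless, since $\lambda\neq0$ follows from $\sum_i\alpha_i^p=1$. Second, coordinates with $x_i=0$ cause no trouble: there $\alpha_i=0$, so $\beta_i=0$ as well, which is consistent with the formula. The rest of the argument is routine.
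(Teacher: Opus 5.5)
Your proposal is correct and follows the paper's own proof: the same chain of inequalities ending in equality in Hölder's inequality, normalization of the proportionality constant to $\lambda=1$, and the conclusion $\|x_i^*\|^q=\|x_i\|^p$ (equivalently $\|x_i^*\|=\|x_i\|^{p-1}$) for every element of $S_{(x_i)}$. Your explicit justification that $\lambda\neq 0$ is a small improvement in care over the paper, but the argument is otherwise identical.
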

\begin{proof}
    Since $(x_{i}^*)\in S_{(x_{i})}$, so we have 
    \begin{align*}
    1=(x_{i}^*)((x_{i}))&=\sum_{i=1}^{\infty}x_{i}^*(x_{i})\\
    &\leq\sum_{i=1}^{\infty}|x_{i}^*(x_{i})|\\
    &\leq\sum_{i=1}^{\infty}\|x_{i}^*\|\|x_{i}\|.\\
    \end{align*}
    Since $(\|x_{i}^*\|)\in S(\ell^{q})$ and $(\|x_{i}\|)\in S(\ell^{p})$, by the Hölder's inequality, we have that 
    \begin{align*}
    1&\leq\sum_{i=1}^{\infty}\|x_{i}^*\|\|x_{i}\|\\
    &\leq(\sum_{i=1}^{\infty}\|x_{i}^*\|^{q})^{\frac{1}{q}}(\sum_{i=1}^{\infty}\|x_{i}\|^{p})^{\frac{1}{p}}\\
    &=1.
    \end{align*}
    This gives that 
    \[
    \sum_{i=1}^{\infty}\|x_{i}^*\|\|x_{i}\|=(\sum_{i=1}^{\infty}\|x_{i}^*\|^{q})^{\frac{1}{q}}(\sum_{i=1}^{\infty}\|x_{i}\|^{p})^{\frac{1}{p}}=1.
    \]
    Thus equality holds in Hölder's inequality. Consequently, the sequences $(\|x_{i}^*\|^{q})$ and $(\|x_{i}\|^{p})$
are linearly dependent. So there exists a $\lambda\in\mathbb{C}$ such that $(\|x_{i}^*\|^{q})=\lambda(\|x_{i}\|^{p})$. Consequently, $\|x_{i}^*\|^{q}=\lambda\|x_{i}\|^{p}$ for each $i\in \mathbb{N}$. Since $\|(x_{i}^*)\|_{q}=\|(x_{i})\|_{p}=1$, we get that $\lambda=1$. Thus $\|x_{i}^*\|^{q}=\|x_{i}\|^{p}$. Similar argument when applies to $(y_{i}^*)$ gives that $\|y_{i}^*\|^{q}=\|x_{i}\|^{p}$. Combining the last two equations, we get that $\|x_{i}^*\|^{q}=\|y_{i}^*\|^{q}$, and hence, $\|x_{i}^*\|=\|y_{i}^*\|$ for each $i\in \mathbb{N}$. 
\end{proof}
Let $(x_{i}^{*}) \in S_{(x_{i})}$. The following remark relates the normalized coordinates of $(x_{i}^{*})$ to the coordinatewise state spaces of the elements $x_{i}$.

\begin{rem}\label{R24}
From the proof of the above lemma, we observe that $Re(x_{i}^*(x_{i}))=\|x_{i}^*\|\|x_{i}\|$ and $|x_{i}^*(x_{i})|=\|x_{i}^*\|\|x_{i}\|$. Therefore, $x_{i}^*(x_{i})=\|x_{i}^*\|\|x_{i}\|$ for each $i\in\mathbb{N}$. This gives that $\frac{x_{i}^*}{\|x_{i}^*\|}\in S_{\frac{x_{i}}{\|x_{i}\|}}$ for each $i\in \mathbb{N}$, where $\|x_{i}^*\|\neq 0$.	
\end{rem}
In the next theorem, we make use of Lemma~\ref{L3} to characterize w$^*$-w PC's of the state spaces in the space $\ell^{p}(X)$. 
\begin{thm}\label{T4}
    Let $(x_{i})\in S(\ell^{p}(X))$ with $x_{i}\neq 0$ for each $i\in \mathbb{N}$. Then $I:(S_{(x_{i})}, \textnormal{w}^*)\to (S_{(x_{i})}, \textnormal{w})$ is continuous at a point $(x_{i}^*)$ if and only if for each $i\in \mathbb{N}$, the functional $\frac{x_{i}^*}{\|x_{i}^*\|}\in S_{\frac{x_{i}}{\|x_{i}\|}}$ is a \textnormal{w}$^*$-\textnormal{w} PC.
\end{thm}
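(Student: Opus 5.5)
The plan is to exploit Lemma~\ref{L3}: every element of $S_{(x_i)}$ has the same coordinate norms, namely $\|x_i^*\|=\|x_i\|^{p/q}=:c_i>0$ (nonzero since $x_i\neq0$). By Remark~\ref{R24}, the map
\[
\Phi:(y_i^*)\mapsto \Bigl(\tfrac{y_i^*}{c_i}\Bigr)_{i}
\]
sends $S_{(x_i)}$ into $\prod_i S_{x_i/\|x_i\|}$. Conversely, any choice $u_i^*\in S_{x_i/\|x_i\|}$ gives $(c_iu_i^*)\in S_{(x_i)}$, since $\sum c_i u_i^*(x_i)=\sum c_i\|x_i\|=\sum\|x_i\|^p=1$ and the norm is $1$. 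So $S_{(x_i)}=\{(c_iu_i^*):u_i^*\in S_{x_i/\|x_i\|}\}$ is a ``weighted product'' of the coordinate state spaces.

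\emph{Necessity.} Fix $i$ and let $(u_\alpha^*)$ be a net in $S_{x_i/\|x_i\|}$ converging w$^*$ to $x_i^*/c_i$. Form the net in $S_{(x_i)}$ that equals $c_iu_\alpha^*$ in coordinate $i$ and $x_j^*$ elsewhere. It converges w$^*$ to $(x_j^*)$ in $\ell^q(X^*)\subseteq\ell^p(X)^*$, because the coordinates are uniformly bounded and only one of them varies. By hypothesis it converges weakly. Composing with the bounded coordinate projection $\ell^p(X)^*\to X^*$, $(y^*_j)\mapsto y^*_i$, whose adjoint is the coordinate embedding $X^{**}\to\ell^p(X)^{**}$ and is therefore weakly continuous, gives $u_\alpha^*\to x_i^*/c_i$ weakly. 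Because $X$ may be non-reflexive, I will need to note that $\ell^p(X)^*=\ell^q(X^*)$ holds for $1<p<\infty$ for arbitrary $X$. That is standard for sequence spaces, and it makes the coordinate projection legitimate.

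\emph{Sufficiency.} This is the main obstacle. Let $(y_\alpha^*)\to(x_i^*)$ w$^*$ in $S_{(x_i)}$. Coordinatewise, $y^*_{\alpha,i}\to x_i^*$ w$^*$ (test against $x$ placed in coordinate $i$). By the hypothesis applied to the normalized points, $y^*_{\alpha,i}\to x_i^*$ weakly in $X^*$ for each $i$. To pass to weak convergence in $\ell^q(X^*)$, take $\Psi\in\ell^q(X^*)^*$; it decomposes as $\Psi=\Psi_N+R_N$, where $\Psi_N$ acts only on the first $N$ coordinates (a finite sum of elements of $X^{**}$) and $R_N$ acts on the tail. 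The key uniform tail estimate comes from Lemma~\ref{L3}: for every element of $S_{(x_i)}$ the tail norm is
\[
\Bigl(\sum_{i>N}\|y_i^*\|^q\Bigr)^{1/q}=\Bigl(\sum_{i>N}\|x_i\|^p\Bigr)^{1/q},
\]
which is independent of $\alpha$ and tends to $0$. Writing $P_N$ for the tail projection, this gives $|\Psi(P_N(y_\alpha^*-x^*))|\le\|\Psi\|\cdot 2\bigl(\sum_{i>N}\|x_i\|^p\bigr)^{1/q}$, uniformly in $\alpha$. The head part converges because finitely many coordinates converge weakly. A standard $\varepsilon/2$ argument then yields $\Psi(y_\alpha^*)\to\Psi((x_i^*))$.

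Care is needed that the head/tail decomposition of $\Psi$ is valid, i.e.\ $\Psi(y)=\Psi(P_{\le N}y)+\Psi(P_{>N}y)$. This is just linearity together with boundedness of the projections, so no description of $\ell^q(X^*)^*$ is needed. This uniform tail control is exactly where $p>1$ and Lemma~\ref{L3} enter, replacing the diameter condition needed when $p=1$.
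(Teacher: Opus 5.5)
Your proposal is correct and follows essentially the same route as the paper. Necessity is proved by perturbing a single coordinate inside $S_{(x_i)}$ and projecting; sufficiency uses Lemma~\ref{L3} to fix the coordinate norms, so the tail is uniformly small while the finitely many head coordinates converge weakly by hypothesis. The only difference is cosmetic: the paper tests against $(x_i^{**})\in\ell^{p}(X^{**})$, implicitly identifying $\ell^{q}(X^*)^*$, and bounds the tail by H\"older. Your head/tail splitting of an arbitrary $\Psi$, with the uniform bound $2\bigl(\sum_{i>N}\|x_i\|^p\bigr)^{1/q}$, avoids that identification.
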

\begin{proof}
     Let $(x_{i}^*)\in S_{(x_{i})}$ be a w$^*$-w PC and fix $i_{0}\in \mathbb{N}$. To show that the functional $\frac{x_{i_{0}}^*}{\|x_{i_{0}}^*\|}\in S_{\frac{x_{i_{0}}}{\|x_{i_{0}}\|}}$ is a w$^*$-w PC, we consider a net $\{x_{\alpha}^*\}_{\alpha\in J}\in S_{\frac{x_{i_{0}}}{\|x_{i_{0}}\|}}$ such that $x_{\alpha}^*\to \frac{x_{i_{0}}^*}{\|x_{i_{0}}^*\|}$ in the w$^*$-topology. For $\alpha\in J$, we define
     \[x_{i}^*(\alpha)=
     \begin{cases}
        x_{\alpha}^*\|x_{i}^*\|, &\text{ if } i=i_{0},\\
        x_{i}^*, & \text{ otherwise }.
     \end{cases}
     \]
     So for each $\alpha\in J$, we get that
     \begin{align*}
		(x_{i}^*(\alpha))((x_{i}))&=\left(\sum_{i\neq i_{0}}x_{i}^*(x_{i})\right)+\|x_{i_{0}}^*\|x_{\alpha}^*(x_{i_{0}})\\
       (x_{i}^*(\alpha))((x_{i}))&=\left(\sum_{i\neq i_{0}}\|x_{i}^*\|\|x_{i}\|\right)+\|x_{i_{0}}^*\|\|x_{i_{0}}\|\\
       &=\sum_{i=1}^{\infty}\|x_{i}^*\|\|x_{i}\|\\
       &=1. 
     \end{align*} 
     Also, note that for each $\alpha\in J$, we have
     $$
\|(x_{i}^*(\alpha))\|^{q}_{q}=\sum_{i=1}^{\infty}\|x_{i}^*\|^{q}=1.
     $$
     So we get a net, namely, $\{(x_{i}^*(\alpha))\}_{\alpha\in J}\subseteq S_{(x_{i})}$ such that $(x_{i}^*(\alpha))\to (x_{i}^*)$ in the w$^*$-topology. By the hypothesis, we have that $(x_{i}^*(\alpha))\to (x_{i}^*)$ in the w-topology. So we get the cordinatewise weak convergence which implies that $x_{\alpha}^*\|x_{i_{0}}^*\|\to x_{i_{0}}^*$ in the w-topology. Hence, $x_{\alpha}^*\to \frac{x_{i_{0}}^*}{\|x_{i_{0}}^*\|}$ in the w-topology. This proves the necessary part.

     Conversely, suppose for each $i\in \mathbb{N}$, the functional $\frac{x_{i}^*}{\|x_{i}^*\|}\in S_{\frac{x_{i}}{\|x_{i}\|}}$ is a w$^*$-w PC. Let $\{(x_{i}^*(\alpha))\}_{\alpha\in J}\subseteq S_{(x_{i})}$ be a net such that $(x_{i}^*(\alpha))\to (x_{i}^*)$ in the w$^*$-topology. From Lemma~\ref{L3}, we have that $\|x_{i}^*(\alpha)\|=\|x_{i}^*\|$ for each $i\in\mathbb{N}$ and for each $\alpha\in J$. Suppose $(x_{i}^{**})\in \ell^{p}(X^{**})$ and $\epsilon>0$. By the virtue of the Hölder's inequality, we can choose $M\in \mathbb{N}$ such that $\sum_{i=M+1}^{\infty}\|x_{i}^{*}\|\|x_{i}^{**}\|<\frac{\epsilon}{4}.$
     
     Now we claim that there exists $\alpha_{0}\in J$ such that $|\sum_{i=1}^{\infty}x_{i}^{**}(x_{i}^*(\alpha)-x_{i}^*)|<\epsilon$ for all $\alpha\geq \alpha_{0}$. We have 
     \begin{align*}
        |\sum_{i=1}^{\infty}x_{i}^{**}(x_{i}^*(\alpha)-x_{i}^*)|&=|\sum_{i=1}^{\infty}\|x_{i}^*\|x_{i}^{**}\left(\frac{x_{i}^*(\alpha)}{\|x_{i}^*(\alpha)\|}-\frac{x_{i}^*}{\|x_{i}^*\|}\right)|\\
        &\leq\sum_{i=1}^{M}\|x_{i}^*\||x_{i}^{**}\left(\frac{x_{i}^*(\alpha)}{\|x_{i}^*(\alpha)\|}-\frac{x_{i}^*}{\|x_{i}^*\|}\right)|\\
        &+\sum_{i=M+1}^{\infty}\|x_{i}^*\|\|x_{i}^{**}\|\|\left(\frac{x_{i}^*(\alpha)}{\|x_{i}^*(\alpha)\|}-\frac{x_{i}^*}{\|x_{i}^*\|}\right)\|.\\
\end{align*}
Since $\|\left(\frac{x_{i}^*(\alpha)}{\|x_{i}^*(\alpha)\|}-\frac{x_{i}^*}{\|x_{i}^*\|}\right)\|\leq 2$, we get that 
        \begin{align}
            |\sum_{i=1}^{\infty}x_{i}^{**}(x_{i}^*(\alpha)-x_{i}^*)|\leq\sum_{i=1}^{M}\|x_{i}^*\||x_{i}^{**}\left(\frac{x_{i}^*(\alpha)}{\|x_{i}^*(\alpha)\|}-\frac{x_{i}^*}{\|x_{i}^*\|}\right)|+\frac{\epsilon}{2}.
        \end{align}
        By our assumption, $(x_{i}^*(\alpha))\to (x_{i}^*)$ in the w$^*$-topology. This implies that for each $i\in\mathbb{N}$, we have $x_{i}^*(\alpha)\to x_{i}^*$ in the w$^*$-topology. As $\|x_{i}^*(\alpha)\|=\|x_{i}^*\|$ for each $i\in\mathbb{N}$ and for each $\alpha\in J$ , we get that $\frac{x_{i}^*(\alpha)}{\|x_{i}^*(\alpha)\|}\to \frac{x_{i}^*}{\|x_{i}^*\|}$ in the w$^*$-topology. By the hypothesis, we have $\frac{x_{i}^*(\alpha)}{\|x_{i}^*(\alpha)\|}\to \frac{x_{i}^*}{\|x_{i}^*\|}$ in the w-topology. We choose $\alpha_{0}\in J$ such that for each $1\leq i\leq M$, we have 
        \begin{align}
        |x_{i}^{**}\left(\frac{x_{i}^*(\alpha)}{\|x_{i}^*(\alpha)\|}-\frac{x_{i}^*}{\|x_{i}^*\|}\right)|<\frac{\epsilon}{2M\|x_{i}^*\|} \text{ for all } \alpha\geq \alpha_{0}.
    \end{align}
     By Combining $(1)$ and $(2)$, we get that $|\sum_{i=1}^{\infty}x_{i}^{**}(x_{i}^*(\alpha)-x_{i}^*)|<\epsilon$ for all $\alpha\geq \alpha_{0}$. This establishes the claim. Hence $(x_{i}^*(\alpha))\to (x_{i}^*)$ in the w-topology. This completes the proof.
\end{proof}
In the next corollary, we discuss the case, when the sequence $(x_{i})$ takes the value $``0"$ at some coordinates. It is an application of Proposition~\ref{P2} and Theorem~\ref{T4}. 
\begin{cor}\label{C320}
    Let $(x_{i})\in S(\ell^{p}(X))$. Then $I:(S_{(x_{i})}, \textnormal{w}^*)\to (S_{(x_{i})}, \textnormal{w})$ is continuous at a point $(x_{i}^*)$ if and only if for each $i\in \mathbb{N}$, where $x_{i}^*\neq 0$, the functional $\frac{x_{i}^*}{\|x_{i}^*\|}\in S_{\frac{x_{i}}{\|x_{i}\|}}$ is a \textnormal{w}$^*$-\textnormal{w} PC.
\end{cor}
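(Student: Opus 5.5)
Let $N=\{i\in\mathbb{N}: x_{i}\neq 0\}$. By the proof of Lemma~\ref{L3}, every $(x_{i}^*)\in S_{(x_{i})}$ satisfies $\|x_{i}^*\|^{q}=\|x_{i}\|^{p}$. Hence $x_{i}^*\neq 0$ exactly when $i\in N$, and for such $i$ the normalized functional $\frac{x_{i}^*}{\|x_{i}^*\|}$ lies in $S_{\frac{x_{i}}{\|x_{i}\|}}$ by Remark~\ref{R24}. The plan is to split off the zero coordinates with Proposition~\ref{P2} and then rerun the argument of Theorem~\ref{T4} on the remaining coordinates.

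First, decompose $\ell^{p}(X)=Y\bigoplus_{\ell^{p}}Z$, where $Y$ consists of the sequences supported on $N$ and $Z$ of those supported on $N^{c}$. Then $(x_{i})=(y,0)$ with $y=(x_{i})_{i\in N}\in S(Y)$. By Proposition~\ref{P2}, $S_{(x_{i})}=S_{(y,0)}^{Y^*}$. This is an equality of sets carrying the relative w$^*$- and w-topologies of $\ell^{p}(X)^*=Y^*\bigoplus_{\ell^{q}}Z^*$. On vectors of the form $(y^*,0)$, these topologies coincide with the w$^*$- and w-topologies of $Y^*$, because the dual pairings with $Y$ and with $Y^{**}$ are the restrictions of those with $X$ and $X^{**}$. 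Consequently, $(x_{i}^*)$ is a w$^*$-w PC of $S_{(x_{i})}$ if and only if $(x_{i}^*)_{i\in N}$ is a w$^*$-w PC of the state space of $y$ in $Y^*$.

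Next, $Y$ is isometric to $\ell^{p}(X)$ if $N$ is infinite, and to a finite $\ell^{p}$-sum $\ell^{p}_{n}(X)$ if $|N|=n$. In either case every coordinate of $y$ is nonzero. For infinite $N$, Theorem~\ref{T4} applies directly after reindexing $N$ by $\mathbb{N}$. For finite $N$, the proof of Theorem~\ref{T4} goes through verbatim: Lemma~\ref{L3} and Hölder's inequality hold for finite sums, and the tail estimate is vacuous once $M=n$. Either way, continuity holds exactly when each $\frac{x_{i}^*}{\|x_{i}^*\|}$, $i\in N$, is a w$^*$-w PC of $S_{\frac{x_{i}}{\|x_{i}\|}}$. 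Since $N$ is precisely the set of $i$ with $x_{i}^*\neq0$, this is the stated condition.

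The main point to check is the identification of topologies in the first step. A w$^*$-convergent net in $S_{(x_{i})}$ has all its $Z^*$-coordinates equal to $0$, so weak convergence only needs to be tested against functionals of the form $(y^{**},z^{**})$ with $z^{**}$ acting on zero. This is immediate, so the rest is bookkeeping. Alternatively, one can avoid the decomposition altogether: rerun the proof of Theorem~\ref{T4}, noting that the coordinates $i\notin N$ are identically zero along every net in $S_{(x_{i})}$ and therefore contribute nothing to any estimate.
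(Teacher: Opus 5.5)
Your proposal is correct and follows essentially the same route as the paper: split $\ell^{p}(X)$ along the support $\{i: x_i\neq 0\}$, use Lemma~\ref{L3} to see that every functional in $S_{(x_i)}$ vanishes exactly off that support, identify the state spaces via Proposition~\ref{P2}, and apply Theorem~\ref{T4} to the support part. You also make explicit two details the paper leaves implicit. First, you check that the relative w$^*$- and w-topologies on $Y^*\times\{0\}$ coincide with those of $Y^*$. Second, you handle the case of finite support, where Theorem~\ref{T4} is not literally applicable but its proof goes through with $M=n$.
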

\begin{proof}
    Let $\Lambda=\{i\in\mathbb{N}:x_{i}=0\}$. Then we can write $\ell^{p}(X)=\ell^{p}(\Lambda, X)\bigoplus_{\ell^{p}}\ell^{p}(\Lambda^c, X)$. We may consider $(x_{i})\in \ell^{p}(\Lambda^c, X)$. From Lemma~\ref{L3}, we have that $x_{i}^*=0$ if and only if $x_{i}=0$, that is, $i\in \Lambda$. So we may also consider that the functional $(x_{i}^*)\in \ell^{q}(\Lambda^c, X^*)$. Now we apply the Theorem~\ref{T4} to the space $\ell^{p}(\Lambda^c, X)$ to get that $(x_{i}^*)\in S_{(x_{i})}^{\ell^{q}(\Lambda^c, X^*)}$ is w$^*$-w PC if and only if $\frac{x_{i}^*}{\|x_{i}^*\|}$ is w$^*$-w PC for each $i\in \Lambda^c$. Hence the conclusion follows from Proposition~\ref{P2}.
\end{proof}
Now we discuss the weak compactness of the state spaces using a similar idea of 	``convergence of nets in the sate spaces" as in Theorem~\ref{T4}.
\begin{cor}\label{C321}
     Let $(x_{i})\in S(\ell^{p}(X))$. Then $S_{(x_{i})}$ is weakly compact if and only if for each $i\in \mathbb{N}$, where $x_{i}\neq 0$, the state space $S_{\frac{x_{i}}{\|x_{i}\|}}$ is weakly compact.
\end{cor}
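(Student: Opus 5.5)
The idea is to translate weak compactness into a statement about w$^*$-w points of continuity and then invoke Corollary~\ref{C320}. Recall from the introduction that a state space $S_x$ is always w$^*$-compact. Hence $S_x$ is weakly compact if and only if the identity $I:(S_x,\textnormal{w}^*)\to(S_x,\textnormal{w})$ is continuous. Indeed, if $S_x$ is weakly compact, the weak topology is a compact topology finer than the Hausdorff w$^*$-topology, so the two coincide on $S_x$. Conversely, if the identity is continuous, then $S_x$ is the continuous image of a compact space, hence weakly compact. Continuity of the identity is exactly the statement that every point of $S_x$ is a w$^*$-w PC.

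Thus $S_{(x_i)}$ is weakly compact if and only if every $(x_i^*)\in S_{(x_i)}$ is a w$^*$-w PC. By Corollary~\ref{C320}, this holds if and only if, for every $(x_i^*)\in S_{(x_i)}$ and every $i$ with $x_i^*\neq 0$, the functional $\frac{x_i^*}{\|x_i^*\|}$ is a w$^*$-w PC of $S_{\frac{x_i}{\|x_i\|}}$. By Lemma~\ref{L3} and its proof, $\|x_i^*\|^q=\|x_i\|^p$, so the condition $x_i^*\neq0$ is the same as $x_i\neq 0$. It then remains to show that, as $(x_i^*)$ ranges over $S_{(x_i)}$, the normalized coordinate $\frac{x_{i_0}^*}{\|x_{i_0}^*\|}$ ranges over all of $S_{\frac{x_{i_0}}{\|x_{i_0}\|}}$ for each fixed $i_0$ with $x_{i_0}\neq0$. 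Remark~\ref{R24} gives one inclusion.

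For the reverse inclusion, I would use the substitution construction from the proof of Theorem~\ref{T4}. Fix some $(x_i^*)\in S_{(x_i)}$, which exists by Hahn--Banach, and take any $y^*\in S_{\frac{x_{i_0}}{\|x_{i_0}\|}}$. Replace the $i_0$-th coordinate by $\|x_{i_0}^*\|y^*$ and leave the others unchanged. The same computation as in that proof shows that the new sequence has norm $1$ and value $1$ at $(x_i)$, so it lies in $S_{(x_i)}$. Its normalized $i_0$-coordinate is $y^*$.

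Putting these together, the condition from Corollary~\ref{C320} holding at every point of $S_{(x_i)}$ is equivalent to every point of each $S_{\frac{x_i}{\|x_i\|}}$ (for $x_i\neq0$) being a w$^*$-w PC. By the first paragraph applied to each coordinate state space, this is equivalent to each such $S_{\frac{x_i}{\|x_i\|}}$ being weakly compact. The only step needing care is this surjectivity of the coordinate maps, and the substitution construction handles it directly. The rest is bookkeeping with the compact/Hausdorff topology argument.
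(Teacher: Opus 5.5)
Your argument is correct, but it is organized differently from the paper's.

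The paper proves the corollary directly. For necessity it pushes $S_{(x_i)}$ through the weakly continuous coordinate projections $\Pi_i$. For sufficiency it takes an arbitrary net and extracts a w$^*$-convergent subnet. It then uses Lemma~\ref{L3} and Remark~\ref{R24} to get w$^*$-convergence of the normalized coordinates inside $S_{\frac{x_i}{\|x_i\|}}$, hence weak convergence by coordinatewise weak compactness. Finally it reruns the tail estimate of Theorem~\ref{T4}. Zero coordinates are handled separately by splitting off $\ell^p(\Lambda,X)$ and invoking Corollary~\ref{R2}.

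You instead use the general fact that a w$^*$-compact set is weakly compact exactly when each of its points is a w$^*$-w PC. This makes the statement a formal consequence of the pointwise characterization in Corollary~\ref{C320}, which already covers zero coordinates. So no net extraction or tail argument has to be repeated.

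The price is surjectivity of the map $(x_j^*)\mapsto \frac{x_i^*}{\|x_i^*\|}$ from $S_{(x_i)}$ onto $S_{\frac{x_i}{\|x_i\|}}$, which you prove explicitly with the substitution construction. That step is worth having. The paper's projection argument tacitly needs the same fact. Since $\|x_i^*\|^q=\|x_i\|^p$, a priori $\Pi_i(S_{(x_i)})$ is only a subset of $\|x_i\|^{p-1}S_{\frac{x_i}{\|x_i\|}}$, and weak compactness of a subset says nothing about the whole state space.

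Both routes rest on the same ingredients: Lemma~\ref{L3}, Remark~\ref{R24} and the tail estimate of Theorem~\ref{T4}. Yours is more modular and makes the surjectivity step explicit. The paper's shows by hand that every net in $S_{(x_i)}$ has a weakly convergent subnet.
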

\begin{proof}
   Let $\Lambda=\{i\in \mathbb{N}:x_{i}=0\}$. First we assume that $\Lambda$ is an empty set, that is, $x_{i}\neq0$ for all $i\in \mathbb{N}$. Suppose $S_{(x_{i})}$ is weakly compact. For $i\in \mathbb{N}$, let $\Pi_{i}:S_{(x_{i})}\to X^*$ that sends an element of $S_{(x_{i})}$ to its $i^{th}$ coordinate. It is easy to check that each $\Pi_{i}$ is weakly continuous. Since $S_{(x_{i})}$ is weakly compact, we get that each cordinatewise state space $S_{\frac{x_{i}}{\|x_{i}\|}}$ is weakly compact in $X^*$. 

   Conversely, suppose for each $i\in \mathbb{N}$, the state space $S_{\frac{x_{i}}{\|x_{i}\|}}$ is weakly compact. Let $\{(x_{i}^*(\alpha))\}_{\alpha\in J}\subseteq S_{(x_{i})}$ be a net. Since $S_{(x_{i})}$ is w$^*$-compact, there exists a subnet, says again, $\{(x_{i}^*(\alpha))\}_{\alpha\in J}$ and an element $(x_{i}^*)\in S_{(x_{i})}$ such that $(x_{i}^*(\alpha))\to (x_{i}^*)$ in the w$^*$-topology. From Lemma~\ref{L3}, we have that $\|x_{i}^*(\alpha)\|=\|x_{i}^*\|$ for each $\alpha\in J$ and $i\in \mathbb{N}$. Therefore, we get that $\frac{x_{i}^*(\alpha)}{\|x_{i}^*(\alpha)\|}\to \frac{x_{i}^*}{\|x_{i}^*\|}$ in the w$^*$-topology. From Remark~\ref{R24}, it follows that for each $i \in \mathbb{N}$, the points $\frac{x_{i}^*(\alpha)}{\|x_{i}^*(\alpha)\|}$ and $\frac{x_{i}^*}{\|x_{i}^*\|}$ belong to the corresponding state space $S_{\frac{x_{i}}{\|x_{i}\|}}$. By the hypothesis, since each state space $S_{\frac{x_{i}}{\|x_{i}\|}}$ is weakly compact, we have that w-topology and w$^*$-topology coincides on $S_{\frac{x_{i}}{\|x_{i}\|}}$. Thus we get that $\frac{x_{i}^*(\alpha)}{\|x_{i}^*(\alpha)\|}\to \frac{x_{i}^*}{\|x_{i}^*\|}$ in the w-topology. Now we can use an argument similar to the one used in Theorem~\ref{T4} to get that the net $(x_{i}^*(\alpha))\to (x_{i}^*)$ in the w-topology. Thus, we showed that every net in $S_{(x_{i})}$ has a weakly converegent subnet. So $S_{(x_{i})}$ is weakly compact.

   If $\Lambda$ is non-empty, then we write $\ell^{p}(X)=\ell^{p}(\Lambda, X)\bigoplus_{\ell^{p}}\ell^{p}(\Lambda^c, X)$. We may consider $(x_{i})\in \ell^{p}(\Lambda^c, X)$. Hence the conclusion follows from Corollary~\ref{R2}. This completes the proof.
\end{proof}
In the following theorem, we give a similar characterization for w$^*$-$\|.\|$ PC's of the state spaces in the space $\ell^{p}(X)$.
\begin{thm}\label{T4'}
    Let $(x_{i})\in S(\ell^{p}(X))$ with $x_{i}\neq 0$ for each $i\in \mathbb{N}$. Then $I:(S_{(x_{i})}, \textnormal{w}^*)\to (S_{(x_{i})}, \|.\|)$ is continuous at a point $(x_{i}^*)$ if and only if for each $i\in \mathbb{N}$, the functional $\frac{x_{i}^*}{\|x_{i}^*\|}\in S_{\frac{x_{i}}{\|x_{i}\|}}$ is a \textnormal{w}$^*$-$\|.\|$ PC.
\end{thm}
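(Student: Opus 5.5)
The plan follows the proof of Theorem~\ref{T4} almost verbatim, replacing weak convergence by norm convergence throughout. Lemma~\ref{L3} is again the key ingredient: it makes the coordinate norms $\|x_i^*(\alpha)\|$ constant along any net in $S_{(x_i)}$. Note that $x_i\neq 0$ forces $\|x_i^*\|^q=\|x_i\|^p\neq 0$ (from the proof of Lemma~\ref{L3}), so the normalizations make sense. By Remark~\ref{R24}, each normalized coordinate lies in $S_{\frac{x_i}{\|x_i\|}}$.

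\emph{Necessity.} Suppose $(x_i^*)$ is a w$^*$-$\|.\|$ PC of $S_{(x_i)}$, fix $i_0$, and take a net $x_\alpha^*\in S_{\frac{x_{i_0}}{\|x_{i_0}\|}}$ with $x_\alpha^*\to \frac{x_{i_0}^*}{\|x_{i_0}^*\|}$ weak$^*$. Define $(x_i^*(\alpha))$ exactly as in Theorem~\ref{T4}: put $\|x_{i_0}^*\|x_\alpha^*$ in coordinate $i_0$ and keep $x_i^*$ elsewhere. The computation there shows this net lies in $S_{(x_i)}$ and converges weak$^*$ to $(x_i^*)$, since only one coordinate moves. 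By hypothesis it then converges in the $\ell^q$-norm. The norm of the difference equals $\|x_{i_0}^*\|\,\|x_\alpha^*-\frac{x_{i_0}^*}{\|x_{i_0}^*\|}\|$, which gives the norm convergence we want.

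\emph{Sufficiency.} Let $(x_i^*(\alpha))\to(x_i^*)$ weak$^*$ in $S_{(x_i)}$. By Lemma~\ref{L3}, $\|x_i^*(\alpha)\|=\|x_i^*\|$ for all $i$ and $\alpha$. Hence each normalized coordinate $\frac{x_i^*(\alpha)}{\|x_i^*\|}$ converges weak$^*$ to $\frac{x_i^*}{\|x_i^*\|}$ inside $S_{\frac{x_i}{\|x_i\|}}$, and therefore, by the hypothesis, in norm. We then estimate
\[
\|(x_i^*(\alpha))-(x_i^*)\|_q^q=\sum_{i=1}^{\infty}\|x_i^*\|^q\Bigl\|\frac{x_i^*(\alpha)}{\|x_i^*\|}-\frac{x_i^*}{\|x_i^*\|}\Bigr\|^q .
\]
Given $\epsilon>0$, choose $M$ with $\sum_{i>M}\|x_i^*\|^q<\epsilon/2^{q+1}$; since each term is bounded by $2^q\|x_i^*\|^q$, the tail is below $\epsilon/2$. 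For the finitely many $i\le M$, norm convergence of the normalized coordinates gives an $\alpha_0$ beyond which the head sum is below $\epsilon/2$. Thus $(x_i^*(\alpha))\to(x_i^*)$ in norm.

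The only point needing care is the uniform tail control. In Theorem~\ref{T4} this required pairing with an arbitrary element of $\ell^p(X^{**})$, but here it is simpler: the tail of the fixed summable sequence $(\|x_i^*\|^q)$ controls everything, because the coordinate norms do not move along the net. I do not expect a genuine obstacle. The crucial input is Lemma~\ref{L3}, which fails for $p=1$, and that failure is why a diameter condition appears in Proposition~\ref{3c4}.
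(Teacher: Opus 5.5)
Your proposal is correct and is essentially the paper's proof. For necessity you use the same one-coordinate perturbation of $(x_i^*)$, which the paper only indicates by reference to Theorem~\ref{T4}; for sufficiency you use Lemma~\ref{L3}, cut the tail at $\sum_{i>M}\|x_i^*\|^q<\epsilon/2^{q+1}$, and bound the normalized differences by $2$, exactly as the paper does.

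One correction concerns your closing aside, which is not part of the proof. For $p=1$ with all $x_i\neq 0$, every state satisfies $\|x_i^*\|=1$ for all $i$, so the norm rigidity of Lemma~\ref{L3} still holds there. What breaks is the tail estimate: the weights $(\|x_i^*\|)$ are no longer $\ell^q$-summable with $q<\infty$. The $\ell^\infty$-tail of the difference is then controlled only by $\sup_{i>M}\textup{diam}\bigl(S_{\frac{x_i}{\|x_i\|}}\bigr)$, and that is where the diameter condition in Proposition~\ref{3c4} comes from.
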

\begin{proof}
     One can use an argument similar to the one used in Theorem~\ref{T4} to establish the necessary part.

     Conversely, suppose for each $i\in \mathbb{N}$, the functional $\frac{x_{i}^*}{\|x_{i}^*\|}\in S_{\frac{x_{i}}{\|x_{i}\|}}$ is a w$^*$-$\|.\|$ PC. Let $\{(x_{i}^*(\alpha))\}_{\alpha\in J}\subseteq S_{(x_{i})}$ be a net such that $(x_{i}^*(\alpha))\to (x_{i}^*)$ in the w$^*$-topology. From Lemma~\ref{L3}, we have that $\|x_{i}^*(\alpha)\|=\|x_{i}^*\|$ for each $i\in\mathbb{N}$ and for each $\alpha\in J$. To show that $(x_{i}^*(\alpha))\to (x_{i}^*)$ in norm, let $\epsilon>0$. Since $(\|x_{i}^*\|)\in S(\ell^{q})$, by the Hölder's inequality, we can choose $M\in \mathbb{N}$ such that $\sum_{i=M+1}^{\infty}\|x_{i}^{*}\|^{q}<\frac{\epsilon}{2^{q+1}}.$
     
     Now we claim that there exists $\alpha_{0}\in J$ such that $\sum_{i=1}^{\infty}\|(x_{i}^*(\alpha)-x_{i}^*)\|^{q}<\epsilon$ for all $\alpha\geq \alpha_{0}$. We have 
     \begin{align*}
        \sum_{i=1}^{\infty}\|(x_{i}^*(\alpha)-x_{i}^*)\|^{q}&=\sum_{i=1}^{\infty}\|x_{i}^*\|^{q}\|\left(\frac{x_{i}^*(\alpha)}{\|x_{i}^*(\alpha)\|}-\frac{x_{i}^*}{\|x_{i}^*\|}\right)\|^{q}\\
        &=\sum_{i=1}^{M}\|x_{i}^*\|^{q}\|\left(\frac{x_{i}^*(\alpha)}{\|x_{i}^*(\alpha)\|}-\frac{x_{i}^*}{\|x_{i}^*\|}\right)\|^{q}\\
        &+\sum_{i=M+1}^{\infty}\|x_{i}^*\|^{q}\|\left(\frac{x_{i}^*(\alpha)}{\|x_{i}^*(\alpha)\|}-\frac{x_{i}^*}{\|x_{i}^*\|}\right)\|^{q}.\\
\end{align*}
Since $\left(\|\frac{x_{i}^*(\alpha)}{\|x_{i}^*(\alpha)\|}-\frac{x_{i}^*}{\|x_{i}^*\|}\|\right)\leq 2$, we get that 
        \begin{align}
            \sum_{i=1}^{\infty}\|(x_{i}^*(\alpha)-x_{i}^*)\|^{q}\leq\sum_{i=1}^{M}\|x_{i}^*\|^{q}\|\left(\frac{x_{i}^*(\alpha)}{\|x_{i}^*(\alpha)\|}-\frac{x_{i}^*}{\|x_{i}^*\|}\right)\|^{q}+\frac{\epsilon}{2}.
        \end{align}
        By our assumption, $(x_{i}^*(\alpha))\to (x_{i}^*)$ in the w$^*$-topology. This implies that for each $i\in\mathbb{N}$, we have that $x_{i}^*(\alpha)\to x_{i}^*$ in the w$^*$-topology. As $\|x_{i}^*(\alpha)\|=\|x_{i}^*\|$, we get that $\frac{x_{i}^*(\alpha)}{\|x_{i}^*(\alpha)\|}\to \frac{x_{i}^*}{\|x_{i}^*\|}$ in the w$^*$-topology. By the hypothesis, we have that $\frac{x_{i}^*(\alpha)}{\|x_{i}^*(\alpha)\|}\to \frac{x_{i}^*}{\|x_{i}^*\|}$ in norm. We choose $\alpha_{0}\in J$ such that for each $1\leq i\leq M$, we have 
        \begin{align}
        \|\left(\frac{x_{i}^*(\alpha)}{\|x_{i}^*(\alpha)\|}-\frac{x_{i}^*}{\|x_{i}^*\|}\right)\|<\frac{\epsilon}{2M\|x_{i}^*\|^{q}} \text{ for all } \alpha\geq \alpha_{0}.
    \end{align}
     By Combining $(3)$ and $(4)$, we get that $\sum_{i=1}^{\infty}\|(x_{i}^*(\alpha)-x_{i}^*)\|^{q}<\epsilon$ for all $\alpha\geq \alpha_{0}$. This establishes the claim. Hence $(x_{i}^*(\alpha))\to (x_{i}^*)$ in norm. This completes the proof.
\end{proof}
Using an argument similar to the one used in Corollary~\ref{C320}, we have the following corollary.
\begin{cor}
 Let $(x_{i})\in S(\ell^{p}(X))$. Then $I:(S_{(x_{i})}, \textnormal{w}^*)\to (S_{(x_{i})}, \|.\|)$ is continuous at a point $(x_{i}^*)$ if and only if for each $i\in \mathbb{N}$, where $x_{i}^*\neq 0$, the functional $\frac{x_{i}^*}{\|x_{i}^*\|}\in S_{\frac{x_{i}}{\|x_{i}\|}}$ is a \textnormal{w}$^*$-$\|.\|$ PC.   
\end{cor}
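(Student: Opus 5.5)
We follow the proof of Corollary~\ref{C320}, with Theorem~\ref{T4'} in place of Theorem~\ref{T4}. Put $\Lambda=\{i\in\mathbb{N}:x_i=0\}$ and split
\[
\ell^{p}(X)=\ell^{p}(\Lambda, X)\bigoplus_{\ell^{p}}\ell^{p}(\Lambda^c, X),
\]
so that $(x_i)$ is identified with $(0,(x_i)_{i\in\Lambda^c})$. By the dual decomposition $\ell^{q}(X^*)=\ell^{q}(\Lambda,X^*)\bigoplus_{\ell^{q}}\ell^{q}(\Lambda^c,X^*)$ and Proposition~\ref{P2}, we have $S_{(x_i)}=S_{(x_i)}^{\ell^{q}(\Lambda^c,X^*)}$ as sets. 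Lemma~\ref{L3} gives $\|x_i^*\|^q=\|x_i\|^p$ for every $(x_i^*)\in S_{(x_i)}$. Hence $x_i^*\neq0$ exactly when $i\in\Lambda^c$, and the indexing condition ``$x_i^*\neq 0$'' in the corollary agrees with ``$x_i\neq 0$''.

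Next we check that the notion of w$^*$-$\|.\|$ PC is the same whether $S_{(x_i)}$ is viewed inside $\ell^{q}(X^*)$ or inside $\ell^{q}(\Lambda^c,X^*)$. Proposition~\ref{P2} states this only for w$^*$-w PC's, so the norm version needs its own argument. The restriction map $(x_i^*)\mapsto(x_i^*)_{i\in\Lambda^c}$ is a bijection between the two copies of the state space, with inverse given by padding with zeros on $\Lambda$. This map is a homeomorphism for the w$^*$-topologies: w$^*$-convergence against elements of $\ell^{p}(\Lambda^c,X)$ is the same as against all of $\ell^{p}(X)$, because the $\Lambda$-coordinates vanish identically. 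It is also an isometry for the $\ell^q$-norms, again because those coordinates are zero. Continuity of the identity $(S,\textnormal{w}^*)\to(S,\|.\|)$ at a point is therefore the same in both ambient spaces.

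Finally, apply Theorem~\ref{T4'} to the space $\ell^{p}(\Lambda^c,X)$. There every coordinate of $(x_i)_{i\in\Lambda^c}$ is nonzero and the norm is still $1$. After reindexing $\Lambda^c$ by $\mathbb{N}$, or by a finite set, where the theorem's proof applies verbatim with the tail estimate trivial, the theorem says: $(x_i^*)$ is a w$^*$-$\|.\|$ PC if and only if $\frac{x_i^*}{\|x_i^*\|}\in S_{\frac{x_i}{\|x_i\|}}$ is a w$^*$-$\|.\|$ PC for each $i\in\Lambda^c$. This is exactly the index set where $x_i^*\neq0$, which proves the corollary.

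The main obstacle is the transfer step in the second paragraph: Proposition~\ref{P2} covers only the w$^*$-w case, so the norm case must be justified directly. That is the observation that the identification preserves both the w$^*$-topology and the norm. A lesser point is that when $\Lambda^c$ is finite, Theorem~\ref{T4'} must be applied to a finite $\ell^p$-sum, where its proof goes through unchanged.
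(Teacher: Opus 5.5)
Your proposal is correct and follows the paper's route. The paper proves this corollary only by pointing to the argument of Corollary~\ref{C320}: split off $\Lambda=\{i:x_i=0\}$, use Lemma~\ref{L3} to get $x_i^*=0$ iff $x_i=0$, apply Theorem~\ref{T4'} on $\ell^{p}(\Lambda^c,X)$, and transfer back via Proposition~\ref{P2}. Your check that the zero-padding identification is both a $\textnormal{w}^*$-homeomorphism and an isometry, and your remark on the case of finite $\Lambda^c$, supply details the paper leaves implicit.
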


The next corollary provides an analogue of Corollary~\ref{C321}, formulated for norm compactness.

\begin{cor}\label{C321'}
    Let $(x_{i})\in S(\ell^{p}(X))$. Then $S_{(x_{i})}$ is norm compact if and only if for each $i\in \mathbb{N}$, where $x_{i}\neq 0$, the state space $S_{\frac{x_{i}}{\|x_{i}\|}}$ is norm compact.
\end{cor}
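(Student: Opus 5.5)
We follow the proof of Corollary~\ref{C321} closely, with weak convergence replaced by norm convergence. We first reduce to the case $\Lambda=\{i\in\mathbb{N}:x_i=0\}=\emptyset$. If $\Lambda\neq\emptyset$, write $\ell^{p}(X)=\ell^{p}(\Lambda,X)\bigoplus_{\ell^{p}}\ell^{p}(\Lambda^c,X)$ and regard $(x_i)$ as an element of $\ell^{p}(\Lambda^c,X)$. By the norm-compactness half of Corollary~\ref{R2}, $S_{(x_i)}$ is norm compact in $\ell^{q}(X^*)$ if and only if the state space of $(x_i)$ computed in $\ell^{q}(\Lambda^c,X^*)$ is norm compact. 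So it suffices to treat the case where every coordinate is nonzero.

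\textbf{Necessity.} Fix $i_0$ and let $\Pi_{i_0}:S_{(x_i)}\to X^*$ be the $i_0$-th coordinate map, which is norm continuous. Its image is norm compact, and we claim it equals $\|x_{i_0}^*\|\,S_{x_{i_0}/\|x_{i_0}\|}$ for any fixed $(x_i^*)\in S_{(x_i)}$. By Lemma~\ref{L3} and Remark~\ref{R24}, every element of the image has norm $\|x_{i_0}^*\|\neq 0$, and its normalization lies in $S_{x_{i_0}/\|x_{i_0}\|}$. Conversely, given $y^*\in S_{x_{i_0}/\|x_{i_0}\|}$, replace the $i_0$-th coordinate of $(x_i^*)$ by $\|x_{i_0}^*\|y^*$. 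The same computation as in the necessity part of Theorem~\ref{T4} shows that the resulting sequence lies in $S_{(x_i)}$. Hence $S_{x_{i_0}/\|x_{i_0}\|}$ is a positive scalar multiple of a norm compact set, and is therefore norm compact.

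\textbf{Sufficiency.} Let $\{(x_i^*(\alpha))\}_{\alpha\in J}$ be a net in $S_{(x_i)}$. Since $S_{(x_i)}$ is w$^*$-compact, we may pass to a subnet converging w$^*$ to some $(x_i^*)\in S_{(x_i)}$. By Lemma~\ref{L3}, $\|x_i^*(\alpha)\|=\|x_i^*\|$ for all $i$ and $\alpha$. Consequently, for each $i$ the normalized coordinates $x_i^*(\alpha)/\|x_i^*(\alpha)\|$ converge w$^*$ to $x_i^*/\|x_i^*\|$, and by Remark~\ref{R24} all of them lie in $S_{x_i/\|x_i\|}$. On a norm compact set the w$^*$-topology is Hausdorff and coarser than the norm topology, so the two topologies coincide there. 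Hence the normalized coordinates converge in norm for each $i$. Now repeat the tail estimate from the proof of Theorem~\ref{T4'}. Choose $M$ so that $\sum_{i>M}\|x_i^*\|^q$ is small; since each normalized difference has norm at most $2$, the tail contributes at most $2^q\sum_{i>M}\|x_i^*\|^q$. The finitely many terms with $i\le M$ become small eventually. Thus $(x_i^*(\alpha))\to(x_i^*)$ in norm, so every net in $S_{(x_i)}$ has a norm convergent subnet and $S_{(x_i)}$ is norm compact.

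The main obstacle is the identification of the coordinate image with a scaled copy of the coordinatewise state space in the necessity part. It relies essentially on Lemma~\ref{L3}: the norms of all coordinates are constant across $S_{(x_i)}$, which is what makes the coordinate substitution stay inside $S_{(x_i)}$. Everything else transfers directly from the arguments already given.
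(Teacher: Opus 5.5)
Your proof is correct and follows the paper's argument. You reduce to nonzero coordinates via Corollary~\ref{R2}, get necessity from continuity of the coordinate maps, and get sufficiency by passing to a w$^*$-convergent subnet, using Lemma~\ref{L3} and Remark~\ref{R24} to place the normalized coordinates in the norm compact coordinatewise state spaces and then running the tail estimate of Theorem~\ref{T4'}; your explicit identification of $\Pi_{i_0}(S_{(x_i)})$ with $\|x_{i_0}^*\|\,S_{x_{i_0}/\|x_{i_0}\|}$ also fills in a step the paper leaves implicit in its necessity part.
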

\begin{proof}
	Without loss of generality, let us assume that $x_{i}\neq 0$ for each $i\in \mathbb{N}$. Suppose $S_{(x_{i})}$ is norm compact. One can use an argument similar to the one used in Corollary~\ref{C321} to get that each coordinatewise state space $S_{\frac{x_{i}}{\|x_{i}\|}}$ is norm compact.

	Conversely, suppose for each $i\in \mathbb{N}$, the state space $S_{\frac{x_{i}}{\|x_{i}\|}}$ is norm compact. Let $\{(x_{i}^*(\alpha))\}_{\alpha\in J}\subseteq S_{(x_{i})}$ be a net. Since $S_{(x_{i})}$ is w$^*$-compact, there exists a subnet, says again, $\{(x_{i}^*(\alpha))\}_{\alpha\in J}$ and an element $(x_{i}^*)\in S_{(x_{i})}$ such that $(x_{i}^*(\alpha))\to (x_{i}^*)$ in the w$^*$-topology. From Lemma~\ref{L3}, we have that $\|x_{i}^*(\alpha)\|=\|x_{i}^*\|$ for each $\alpha\in J$ and $i\in \mathbb{N}$. Therefore, we get that $\frac{x_{i}^*(\alpha)}{\|x_{i}^*(\alpha)\|}\to \frac{x_{i}^*}{\|x_{i}^*\|}$ in the w$^*$-topology. From Remark~\ref{R24}, it follows that for each $i \in \mathbb{N}$, the points $\frac{x_{i}^*(\alpha)}{\|x_{i}^*(\alpha)\|}$ and $\frac{x_{i}^*}{\|x_{i}^*\|}$ belong to the corresponding state space $S_{\frac{x_{i}}{\|x_{i}\|}}$. By the hypothesis, since each state space $S_{\frac{x_{i}}{\|x_{i}\|}}$ is norm compact, we have that norm topology and w$^*$-topology coincides on $S_{\frac{x_{i}}{\|x_{i}\|}}$. Thus we get that $\frac{x_{i}^*(\alpha)}{\|x_{i}^*(\alpha)\|}\to \frac{x_{i}^*}{\|x_{i}^*\|}$ in norm. Now we can use an argument similar to the one used in Theorem~\ref{T4'} to get that the net $(x_{i}^*(\alpha))\to (x_{i}^*)$ in norm. Thus, we showed that every net in $S_{(x_{i})}$ has a norm converegent subnet. So we have that $S_{(x_{i})}$ is norm compact. This completes the proof.
\end{proof}
\section{state spaces of vectors in $L^{p}(\mu, X)$}
In this section, we present results in $L^{p}(\mu, X)$ for a non-atomic probability measure $\mu$, that are analogous to those for discrete $\ell^{p}$-sums. We recall from the introduction that for any Banach space $X$, we have that $L^{q}({\mu, X^*})$ is canonically embedded in $L^{p}(\mu, X)^*$, and equality holds if and only if $X^*$ has the RNP with respect to $\mu$. Let $f \in L^{p}(\mu, X)$ with $\|f\| = 1$. We denote by $\tilde{S}_{f}$, the set $\tilde{S}_{f} = \left\{ g \in L^{q}(\mu, X^{*})_{1} : \int g(\omega)(f(\omega)) \, d\mu(\omega) = 1 \right\}.$ Note that $\tilde{S}_{f}$ may be an empty set. Throughout, we consider $1\leq p<\infty$ and $f\in L^{p}(\mu, X)$ such that $f(\omega)\neq 0$ a.e.                                    

\begin{lem}
	Let $f\in S(L^{p}(\mu,X))$ such that $\tilde{S}_{f}$ is non-empty. If $g_{1}, g_{2}\in \tilde{S}_{f}$, then $\|g_{1}({\omega})\|=\|g_{2}({\omega})\|$ a.e.
\end{lem}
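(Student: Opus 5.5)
The plan is to imitate the proof of Lemma~\ref{L3}, replacing sums by integrals and the discrete Hölder inequality by its integral version, and to show that every $g\in\tilde{S}_{f}$ has its pointwise norm $\|g(\omega)\|$ determined a.e.\ by $\|f(\omega)\|$. Applying this to $g_{1}$ and $g_{2}$ then gives the claim.

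Fix $g\in\tilde{S}_{f}$. The first step is the chain
\[
1=\int g(\omega)(f(\omega))\,d\mu(\omega)\le\int |g(\omega)(f(\omega))|\,d\mu(\omega)\le\int\|g(\omega)\|\,\|f(\omega)\|\,d\mu(\omega)\le \big\|\,\|g(\cdot)\|\,\big\|_{q}\,\big\|\,\|f(\cdot)\|\,\big\|_{p}\le 1,
\]
where the third inequality is Hölder's inequality for the scalar functions $\|g(\cdot)\|\in L^{q}(\mu)$ and $\|f(\cdot)\|\in L^{p}(\mu)$. The last inequality uses $\|g\|_{q}\le 1$ and $\|f\|_{p}=1$. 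Hence every inequality in the chain is an equality.

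Next I split into the two ranges of $p$.

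\emph{Case $1<p<\infty$.} Equality in the integral Hölder inequality, together with $\|g\|_{q}=\|f\|_{p}=1$, forces $\|g(\omega)\|^{q}=\lambda\|f(\omega)\|^{p}$ a.e.\ for some constant $\lambda\ge 0$. Integrating both sides gives $\lambda=1$, so $\|g(\omega)\|^{q}=\|f(\omega)\|^{p}$ a.e.

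\emph{Case $p=1$, $q=\infty$.} Here I avoid the Hölder equality characterization and argue directly. Since $\|g(\omega)\|\le\|g\|_{\infty}\le1$ a.e., the equality $\int\|g\|\,\|f\|\,d\mu=1=\int\|f\|\,d\mu$ yields
\[
\int\big(1-\|g(\omega)\|\big)\|f(\omega)\|\,d\mu(\omega)=0 .
\]
The integrand is nonnegative, so it vanishes a.e. Because $f(\omega)\neq0$ a.e., this gives $\|g(\omega)\|=1$ a.e.

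In both cases $\|g(\omega)\|$ equals a fixed function of $\|f(\omega)\|$ almost everywhere, independent of the choice of $g\in\tilde{S}_{f}$. Applying this to $g_{1}$ and $g_{2}$ and discarding a union of two null sets gives $\|g_{1}(\omega)\|=\|g_{2}(\omega)\|$ a.e.

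The only delicate point is invoking the equality case of the integral Hölder inequality correctly. For $1<p<\infty$ with both functions nonnegative, equality holds exactly when $\|g(\cdot)\|^{q}$ and $\|f(\cdot)\|^{p}$ are proportional a.e. The case $p=1$ genuinely needs the hypothesis $f(\omega)\neq0$ a.e., which is why it is handled separately. Measurability of $\omega\mapsto\|g(\omega)\|$ is automatic, since $g$ is strongly measurable.
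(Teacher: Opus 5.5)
Your proof is correct, and for $1<p<\infty$ it is the paper's argument: the Hölder chain collapses to equalities, the equality case gives $\|g(\omega)\|^{q}=\lambda\|f(\omega)\|^{p}$ a.e., and the normalization forces $\lambda=1$. Your separate treatment of $p=1$ is a worthwhile addition. You derive $\int(1-\|g(\omega)\|)\|f(\omega)\|\,d\mu=0$ and then use the section's standing assumption $f(\omega)\neq0$ a.e. The paper's proof writes $\|g_{1}(\omega)\|^{q}$ without comment even when $q=\infty$, and this $L^{1}$ case is exactly the one used later in Theorem~\ref{T36}. Your argument also shows why the hypothesis $f(\omega)\neq0$ a.e. is indispensable there (compare Remark~\ref{R33}).
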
 
\begin{proof}
   Since $g_{1}\in \tilde{S}_{f}$, so we have that
    \begin{align*}
    1=g_{1}(f)&=\int_{\Omega} g_{1}(\omega)(f(\omega))\\
    &\leq\int_{\Omega} |g_{1}(\omega)(f(\omega))|\\
    &\leq\int_{\Omega} \|g_{1}(\omega)\|\|(f(\omega))\|.\\
    \end{align*}
    Since $(|g_{1}|)\in S(L^{q}(\mu))$ and $(|f|)\in S(L^{p}(\mu))$, by the Hölder's inequality, we have that 
    \begin{align*}
    1&\leq\int_{\Omega} \|g_{1}(\omega)\|\|(f(\omega))\|\\
    &\leq\left(\int_{\Omega}\|g_{1}(\omega)\|^{q}\right)^{\frac{1}{q}}\left(\int_{\Omega}\|f(\omega)\|^{p}\right)^{\frac{1}{p}}\\
    &=1.
    \end{align*}
   This gives that 
   \[
   \int_{\Omega} \|g_{1}(\omega)\|\|(f(\omega))\|=\left(\int_{\Omega}\|g_{1}(\omega)\|^{q}\right)^{\frac{1}{q}}\left(\int_{\Omega}\|f(\omega)\|^{p}\right)^{\frac{1}{p}}\\
    =1.
   \]
   Thus equality holds in Hölder's inequality. Consequently, the functions 
$\omega \mapsto \|g_{1}(\omega)\|^{q}$ and 
$\omega \mapsto \|f(\omega)\|^{p}$ are linearly dependent. So there exists a $\lambda\in\mathbb{C}$ such that $\|g_{1}(\omega)\|^{q}=\lambda\|f(\omega)\|^{p}$ a.e. Since $\|g_{1}\|_{q}=\|f\|_{p}=1$, we get that $\lambda=1$. Thus $\|g_{1}(\omega)\|^{q}=\|f(\omega)\|^{p}$ a.e. Similar argument when applies to $g_{2}$ gives that $\|g_{2}(\omega)\|^{q}=\|f(\omega)\|^{p}$ a.e. Combining the last two equations, we get that $\|g_{1}(\omega)\|^{q}=\|g_{2}(\omega)\|^{q}$, and hence, $\|g_{1}(\omega)\|=\|g_{2}(\omega)\|$ a.e.
\end{proof}
Let $g \in \tilde{S}_{f}$. The following remark relates the normalized coordinates of $g$ to the coordinatewise state spaces of the elements $f$.

\begin{rem}\label{R32}
Let $f\in S(L^{p}(\mu, X))$ and let $Z=\{\omega\in \Omega: f(\omega)\neq 0\}$. Clearly, $\mu(Z)>0$. From the proof of the above lemma, we observe that $g(\omega)=0$ if and only if $f(\omega)=0$. Also we note that $Re(g(\omega)(f(\omega)))=\|g(\omega)\|\|f(\omega)\|$ and $|g(\omega)(f(\omega))|=\|g(\omega)\|\|f(\omega)\|$ a.e. Therefore, $g(\omega)(f(\omega))=\|g(\omega)\|\|f(\omega)\|$ a.e. This gives that $\frac{g(\omega)}{\|g(\omega)\|}\in S_{\frac{f(\omega)}{\|f(\omega)\|}}$ for each $\omega\in Z$.
\end{rem}	

\begin{rem}\label{R33}
    Let $f\in S(L^{1}(\mu, X))$ and let $Z=\{\omega\in\Omega: \|f(\omega)\|=0\}$. Suppose that $1>\mu(Z)>0$. Now we can write $$L^{\infty}(\mu, X^*)= L^{\infty}(Z, X^*)\bigoplus_{\ell^{\infty}} L^{\infty}(Z^{c}, X^*).$$ Notice that $\|f\|=\|f\cdot \chi_{Z^{c}}\|=1$. It is easy to see that
    
    $$\tilde{S}_{f}=L^{\infty}(Z, X^*)_{1}\cross \tilde{S}_{f\cdot \chi_{Z^{c}}}.$$

   Since, the space $L^{\infty}(Z, X^*)$ is not reflexive, we get that $\tilde{S}_{f}$ is not weakly compact.
\end{rem}

We use the remark in the proof of Theorem~\ref{T36} to show that the 
weak compactness of the set $\tilde{S}_{f}$ implies that it is a singleton. Before presenting the main theorem, we recall a well-known result from \cite{Die}. It establishes a connection between the weak closure and the norm closure of a convex subset of a Banach space $X$.

\begin{thm}[Mazur]\cite[Theorem~1]{Die}\label{C31}
	Let $A$ be a convex subset of a Banach space $X$. Then $A$ is norm closed if and only if $A$ is weakly closed. 
\end{thm}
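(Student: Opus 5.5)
The statement is Mazur's theorem: a convex subset $A$ of a Banach space $X$ is norm closed if and only if it is weakly closed. I will prove the two directions separately.

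\emph{Weakly closed implies norm closed.} The weak topology is coarser than the norm topology, because every functional $x^*\in X^*$ is norm continuous. Hence every weakly open set is norm open, and so every weakly closed set is norm closed. Convexity is not needed for this direction.

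\emph{Norm closed implies weakly closed.} This is the substantive direction, and the plan is to show that the complement $X\setminus A$ is weakly open using the Hahn--Banach separation theorem.
\begin{enumerate}
\item Fix $x_0\notin A$. If $A=\emptyset$ there is nothing to prove, so assume $A\neq\emptyset$.
\item Since $A$ is norm closed, there is $r>0$ with $B(x_0,r)\cap A=\emptyset$, where $B(x_0,r)$ is the open norm ball.
\item The sets $B(x_0,r)$ and $A$ are disjoint convex sets, and $B(x_0,r)$ is open. The geometric Hahn--Banach theorem therefore gives $x^*\in X^*$ and $c\in\mathbb{R}$ such that
\[
\operatorname{Re}x^*(a)\le c<\operatorname{Re}x^*(y)\quad\text{for all } a\in A,\ y\in B(x_0,r).
\]
In the complex case, apply the real version to the real part of the functional and then recover a complex functional in the usual way.
\item Taking $y=x_0$ gives $\operatorname{Re}x^*(x_0)>c$.
\item The set $U=\{x:\operatorname{Re}x^*(x)>c\}$ is weakly open, because $x\mapsto\operatorname{Re}x^*(x)$ is weakly continuous. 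It contains $x_0$ and is disjoint from $A$.
\end{enumerate}
Since $x_0\notin A$ was arbitrary, $X\setminus A$ is weakly open, so $A$ is weakly closed.

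The only delicate step is invoking the separation theorem. It requires convexity of $A$, together with openness of the ball, which is where norm closedness of $A$ is used. The rest of the argument is routine.
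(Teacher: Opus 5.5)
Your proof is correct: the weakly-closed-implies-norm-closed direction is immediate, and separating a ball around $x_0\notin A$ from $A$ by a functional gives a weakly open half-space that contains $x_0$ and misses $A$. The paper gives no proof of its own and only cites Diestel. The standard proof of Mazur's theorem is exactly this Hahn--Banach separation argument, so your approach matches the expected one.
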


The following is an elementary fact concerning the non-atomic measures.
\begin{fact}\label{F1}
    Let $(\Omega, \Sigma, \mu)$ be a probability measure space and let $\mu$ be a non-atomic measure. Then there exists a sequence of disjoint subsets $\{Z_{n}\}_{n\in\mathbb{N}}\subseteq \Omega$ such that $\mu( \Omega\setminus\bigcup_{i=1}^{\infty} Z_{i})=0$, that is, $\mu(\bigcup_{i=1}^{n} Z_{i})\to 1$ as $n\to \infty$.
\end{fact}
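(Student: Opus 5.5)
The statement is trivially true with $Z_{1}=\Omega$ and $Z_{n}=\emptyset$ for $n\geq 2$. What the later arguments presumably need is a partition into infinitely many pieces of \emph{positive} measure, so I would prove that stronger form: there exist disjoint measurable sets $Z_{n}$ with $\mu(Z_{n})=2^{-n}$ for every $n\in\mathbb{N}$. Then $\mu\left(\bigcup_{i=1}^{n}Z_{i}\right)=1-2^{-n}\to 1$, and by countable additivity $\mu\left(\Omega\setminus\bigcup_{i=1}^{\infty}Z_{i}\right)=0$.

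The construction is inductive. Suppose $Z_{1},\dots,Z_{n-1}$ are already chosen, pairwise disjoint, with $\mu(Z_{k})=2^{-k}$. The remainder $R_{n-1}=\Omega\setminus\bigcup_{k<n}Z_{k}$ then has measure $2^{-(n-1)}$. The restriction of $\mu$ to $R_{n-1}$ is still non-atomic, because an atom of the restriction would be an atom of $\mu$. So I can pick a measurable $Z_{n}\subseteq R_{n-1}$ with $\mu(Z_{n})=2^{-n}$, which is exactly half of $\mu(R_{n-1})$. Disjointness from the earlier sets is automatic, since $Z_{n}$ lies in the remainder.

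The only nontrivial ingredient is Sierpi\'nski's intermediate value theorem: if $\mu$ is non-atomic and $\mu(A)>0$, then for every $t\in[0,\mu(A)]$ there is a measurable $B\subseteq A$ with $\mu(B)=t$. I expect this to be the main obstacle, and I would cite it as standard. If a self-contained argument is wanted, I would proceed as follows.

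First, non-atomicity gives arbitrarily small positive-measure subsets. Any set of positive measure has a subset $B$ with $0<\mu(B)<\mu(A)$; by replacing $B$ with $A\setminus B$ if necessary, we may assume $\mu(B)\leq\mu(A)/2$. Iterating this halving produces subsets of positive measure below any $\delta>0$.

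Second, a greedy exhaustion builds a set of measure exactly $t$. At each stage, let $s_{k}$ be the supremum of the measures of admissible increments, that is, subsets of the unused part of $A$ whose addition keeps the total at most $t$. Choose an increment of measure at least $s_{k}/2$. The resulting increasing union $B$ satisfies $\mu(B)\leq t$. The increment measures are summable, so $s_{k}\to 0$.

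Finally, if $\mu(B)<t$, the first step would yield a small positive-measure subset of $A\setminus B$ that could still be added without exceeding $t$. Such a subset is admissible at every stage, so its measure is at most every $s_{k}$. Since $s_{k}\to 0$, this contradicts its positive measure, and hence $\mu(B)=t$.
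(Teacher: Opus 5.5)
Your argument is correct. The paper gives no proof to compare it with: it records the statement as an ``elementary fact'' and uses it without justification.

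Your diagnosis of what is actually needed is also right. As written, the fact is trivial ($Z_1=\Omega$, $Z_n=\emptyset$ for $n\ge 2$) and never uses non-atomicity. The proof of Theorem~\ref{T36} needs more: the step ``since $\mu(Z_m)>0$, we get $\|C-h_1\|_\infty\ge\epsilon$'' requires \emph{every} $Z_m$ to have positive measure. There the sets must also lie in $A$, which you get by running the construction in the probability space $(A,\lambda)$. Your version with $\mu(Z_n)=2^{-n}$ supplies exactly this. Your self-contained proof of Sierpi\'nski's theorem is sound: halving gives small sets of positive measure, the greedy exhaustion forces $s_k\to 0$, and $\mu(B)<t$ leads to a contradiction.

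The intermediate value theorem is, however, more than you need, since prescribed measures play no role in Theorem~\ref{T36}. Your halving step alone suffices. Choose $\Omega=A_0\supseteq A_1\supseteq A_2\supseteq\cdots$ with $0<\mu(A_n)\le \mu(A_{n-1})/2$, and set $Z_n=A_{n-1}\setminus A_n$. These sets are pairwise disjoint and satisfy $\mu(Z_n)\ge\mu(A_{n-1})/2>0$. Moreover, $\Omega\setminus\bigcup_n Z_n=\bigcap_n A_n$ has measure at most $\inf_n 2^{-n}=0$. This skips the exhaustion argument entirely. Your route buys the sharper conclusion $\mu(Z_n)=2^{-n}$, at the cost of proving Sierpi\'nski's theorem.
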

 The following theorem is the main result of this section.
\begin{thm}\label{T36}
    Let $f\in S(L^{1}(\mu, X))$ such that $\tilde{S}_{f}$ is a non-empty weakly compact subset of $L^{\infty}({\mu, X^*})$. Then $\tilde{S}_{f}$ is a singleton.
\end{thm}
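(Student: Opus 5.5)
We argue by contradiction: if $\tilde{S}_{f}$ has two distinct elements, we build inside it a copy of a non-reflexive piece of $L^{\infty}$, which is incompatible with weak compactness. The non-atomicity of $\mu$ is what allows this.

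Suppose $g_{1}\neq g_{2}$ in $\tilde{S}_{f}$. By the lemma above, $\|g_{1}(\omega)\|=\|g_{2}(\omega)\|$ a.e. For $p=1$ the Hölder equality case forces $\|g_{i}(\omega)\|=1$ a.e. wherever $f(\omega)\neq 0$, and $f(\omega)\neq 0$ a.e. by the standing assumption. By Remark~\ref{R32}, $g_{1}(\omega),g_{2}(\omega)\in S_{\frac{f(\omega)}{\|f(\omega)\|}}$ a.e. Since $g_{1}\neq g_{2}$, there are $\delta>0$ and a set $A$ with $\mu(A)>0$ on which $\|g_{1}(\omega)-g_{2}(\omega)\|\geq\delta$. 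Shrinking $A$ if necessary, we may also assume the difference is close to a fixed vector, using strong measurability and a countably-valued approximation.

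The key observation is that $\tilde{S}_{f}$ is stable under measurable patching. For any measurable $E\subseteq\Omega$, the function $h_{E}=g_{1}\chi_{E}+g_{2}\chi_{E^{c}}$ again lies in $L^{\infty}(\mu,X^*)_{1}$. It also satisfies $h_{E}(\omega)(f(\omega))=\|f(\omega)\|$ a.e., so $\int h_{E}(f)\,d\mu=1$ and $h_{E}\in\tilde{S}_{f}$.

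Next we use non-atomicity, in the spirit of Fact~\ref{F1}. We split $A$ into disjoint sets $A_{n}$ of positive measure and consider the map
\[
T:\ell^{\infty}\to L^{\infty}(\mu,X^*),\qquad T((t_{n}))=g_{2}+\sum_{n}t_{n}\chi_{A_{n}}(g_{1}-g_{2}).
\]
Two facts about $T$ are needed.
\begin{itemize}
\item For $(t_{n})\in[0,1]^{\mathbb{N}}$, the value $T((t_{n}))(\omega)$ is a convex combination of $g_{1}(\omega)$ and $g_{2}(\omega)$, and state spaces are convex. Hence $T$ maps $[0,1]^{\mathbb{N}}$ into $\tilde{S}_{f}$.
\item $T$ is a bounded linear-affine map whose linear part is an isomorphism onto its range, since $\|\sum t_{n}\chi_{A_{n}}(g_{1}-g_{2})\|_{\infty}\geq\delta\sup|t_{n}|$.
\end{itemize}
So $\tilde{S}_{f}$ contains an affine isomorphic copy of the positive part of the unit ball of $\ell^{\infty}$. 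Alternatively, we can run a variant of Remark~\ref{R33} directly.

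Finally we derive the contradiction. A weakly closed subset of a weakly compact set is weakly compact. By Mazur's theorem (Theorem~\ref{C31}), the norm-closed convex set $T([0,1]^{\mathbb{N}})$ is weakly closed. If $\tilde{S}_{f}$ were weakly compact, this copy would be weakly compact, and the linear part of $T$ is a weak-to-weak homeomorphism onto its image. Then $[0,1]^{\mathbb{N}}\subseteq\ell^{\infty}$ would be weakly compact, and so would $[-1,1]^{\mathbb{N}}=2[0,1]^{\mathbb{N}}-\mathbf{1}$, the unit ball of $\ell^{\infty}$. This would make $\ell^{\infty}$ reflexive, which is false.

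A more concrete alternative avoids this argument: take $A_{n}$ with $\mu(A_{n})>0$ and the sequence $h_{n}=h_{\bigcup_{k\geq n}A_{k}}$. By Eberlein–Šmulian it must have a weakly convergent subsequence, but it converges pointwise a.e. to $g_{2}$ while staying at $L^{\infty}$-distance at least $\delta$ in a way detectable by a functional in $L^{\infty}(\mu,X^*)^*$ (a Banach-limit type functional concentrated near $\bigcap_{n}\bigcup_{k\geq n}A_{k}$).

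The main obstacle is the isomorphism claim in the second bullet. It requires pointwise control of $\|g_{1}(\omega)-g_{2}(\omega)\|$ from below on $A$; the lower bound $\delta$ handles that, and the sup norm then gives the estimate directly. The remaining technical point is measurability of $h_{E}$ and of the series defining $T$, which holds because the $A_{n}$ are disjoint, so the series is pointwise finite.
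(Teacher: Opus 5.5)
Your proof is correct, and it reaches the contradiction by a different mechanism than the paper's.

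\textbf{The paper's route.} It takes $h_1\neq h_2$ and first assumes that $A=\{\|h_1-h_2\|\ge\epsilon\}$ has full measure. The general case is reduced to this one by restricting to $A$, renormalizing $f$, and invoking Remark~\ref{R33} to ensure $\int_A\|f\|>0$. It then partitions $A$ into pieces $Z_n$ that \emph{exhaust} $A$ and studies the single sequence $h_n'=h_2+\sum_{i\le n}\chi_{Z_i}(h_1-h_2)$, which in your notation is $T(\mathbf 1_{\{1,\dots,n\}})$. It shows $h_n'\to h_1$ weak$^*$ in $L^1(\mu,X)^*$. Since the weak and weak$^*$ topologies agree on the weakly compact set $\tilde S_f$, this becomes weak convergence. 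Mazur's theorem then gives the contradiction: every convex combination of the $h_n'$ equals $h_2$ on some $Z_m$, so it stays at distance $\ge\epsilon$ from $h_1$.

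\textbf{Your route.} You package the patching as an affine isomorphic embedding of $[0,1]^{\mathbb N}$ into $\tilde S_f$ and let the non-reflexivity of $\ell^\infty$ do the work. Because you never need to identify a limit, your $A_n$ need not exhaust $A$. As a result, the case split, the restriction map, Remark~\ref{R33} and the hypothesis $f\neq0$ a.e.\ all become unnecessary. You also stay inside the weak topology of $L^\infty(\mu,X^*)$ and never pass through $L^1(\mu,X)^*$. In addition you get a stronger structural statement: $\tilde S_f$ contains an isomorphic copy of the positive part of the unit ball of $\ell^\infty$.

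\textbf{What the paper's route buys.} It is a hands-on version of the standard proof that $(\mathbf 1_{\{1,\dots,n\}})_n$ has no weakly convergent subsequence in $\ell^\infty$. It therefore needs no appeal to the non-reflexivity of $\ell^\infty$ via Kakutani's theorem.

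\textbf{Small points to tidy.}
\begin{itemize}
\item Say explicitly that $T([0,1]^{\mathbb N})$ is norm closed because its linear part is bounded below; this is what Mazur's theorem needs.
\item For complex scalars, $[-1,1]^{\mathbb N}$ is not the unit ball of $\ell^\infty$. The conclusion survives, since the ball is a weakly closed subset of the weakly compact set $[-1,1]^{\mathbb N}+i[-1,1]^{\mathbb N}$.
\item Your ``concrete alternative'' is loosely worded: for disjoint $A_k$ one has $\bigcap_n\bigcup_{k\ge n}A_k=\emptyset$. It is not needed. If you replace the Banach-limit functional with the convex-combination argument, it becomes a tail version of the paper's proof. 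That version converges weak$^*$ to $g_2$ without any exhaustion requirement.
\end{itemize}
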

\begin{proof}
    Let $\tilde{S}_{f}$ be a non-empty weakly compact subset. Suppose that $\tilde{S}_{f}$ is not a singleton and let $h_{1}, h_{2}\in \tilde{S}_{f}$ be two distinct elements such that $\|h_{1}-h_{2}\|_{\infty}\geq \epsilon$ for some $\epsilon>0$. So we get that $\mu(\{\omega\in\Omega:\|h_{1}(\omega)-h_{2}(\omega)\|\geq \epsilon\})>0$. First we assume that $\mu(\{\omega\in\Omega:\|h_{1}(\omega)-h_{2}(\omega)\|\geq \epsilon\})=1$, that is, $\|h_{1}(\omega)-h_{2}(\omega)\|\geq \epsilon$ a.e. Let $A=\{\omega\in\Omega:\|h_{1}(\omega)-h_{2}(\omega)\|\geq \epsilon\}$, so $\mu(A)=1$. Also, by Remark~\ref{R32}, we have that $$h_{1}(\omega)(f(\omega))=h_{2}(\omega)(f(\omega))=\|f(\omega)\|$$ a.e. From Fact~\ref{F1}, we get a sequence of disjoint subsets $\{Z_{n}\}_{n\in\mathbb{N}}\subseteq A$ such that $\mu(\bigcup_{i=1}^{n} Z_{i})\to 1$ as $n\to \infty$. For each $n\in \mathbb{N}$, we define $h_{n}':\Omega\to X^*$ by $$h_{n}'=h_{1}\chi_{\cup_{i=1}^{n}Z_{i}}+h_{2}\chi_{\cap_{i=1}^{n}Z_{i}^{c}}.$$ It is easy to check that $h_{n}'\in L^{\infty}(\mu, X^*)_{1}$ for each $n\in \mathbb{N}$ and $h_{n}'(\omega)(f(\omega))=\|f(\omega)\|$ a.e. So we have that $h_{n}'\in \tilde{S}_{f}$ for each $n\in \mathbb{N}$. Since $\mu(\bigcup_{i=1}^{n} Z_{i})\to 1$ as $n\to \infty$, we have $\mu(\cap_{i=1}^{n}Z_{i}^{c})\to 0$ as $n\to \infty$. It is easy to see that $h_{2}\chi_{\cap_{i=1}^{n}Z_{i}^{c}}\to 0$ and $h_{1}\chi_{\cup_{i=1}^{n}Z_{i}}\to h_{1}$ in the w$^*$-topology of $L^{1}(\mu, X)^*$. Thus $h_{n}'\to h_{1}$ in the w$^*$-topology $L^{1}(\mu, X)^*$. Since $\tilde{S}_{f}$ is weakly compact in $L^{\infty}({\mu, X^*})_{1}$, so it is weakly compact in $L^{1}(\mu, X)_{1}^*$. Therefore, the w-topology and the w$^{*}$-topology coincide on $\tilde{S}_{f}$.  
It follows that $h_{n}' \to h_{1}$ in the w-topology, and hence $h_{1} \in \overline{\operatorname{co}}^{\textnormal{w}} \{h_{n}' : n \in \mathbb{N}\}.$ By Corollary~\ref{C31}, we further obtain $h_{1} \in \overline{\operatorname{co}}^{\|\cdot\|} \{h_{n}' : n \in \mathbb{N}\}.$

	Let $C$ denote an arbitrary convex combination of the elements of $\{h_{n}'\}_{n\in \mathbb{N}}$. Suppose $C=\alpha_{1}h_{n_{1}}'+\alpha_{2}h_{n_{2}}'+\dots +\alpha_{k}h_{n_{k}}'$. If we choose $m>n_{i}$ for each $1\leq i\leq k$, then $C(\omega)=h_{2}(\omega)$ for each $\omega\in Z_{m}$. Since $\mu(Z_{m})>0$, we get $\|C-h_{1}\|_{\infty}\geq \epsilon$. This is a contradiction to the fact that $h_{1}\in \overline{co}^{\|.\|}\{h_{n}':n\in\mathbb{N}\}$. Hence $\tilde{S}_{f}$ is a singleton in the case when $\mu(A)=1$. 
    
    Now suppose that $0<\mu(A)<1$. Let $\lambda=\frac{\mu|_{A}}{\mu(A)}$.  We claim that $\int_{A}\|f(\omega)\|d\lambda>0$. Suppose that $\int_{A}\|f(\omega)\|d\lambda=0$. By the definition of $\lambda$, we have $$\frac{1}{\mu(A)}\cdot\int_{A}\|f(\omega)\|d\mu=0.$$ 
     This implies $\int_{A}\|f(\omega)\|d\mu=0.$ So we have that $\|f(\omega)\|=0$ for almost all $\omega\in A$. By the hypothesis, since the set $\tilde{S}_{f}$ is weakly compact, we have from~\ref{R33} that $\mu(Z)=0$, where $Z=\{\omega\in\Omega: \|f(\omega)\|=0\}$. Consequently, $\mu(A)=0$. This gives a contradiction. It establishes the claim. Thus $\int_{A}\|f(\omega)\|d\lambda>0$.

    Let $f'=\frac{f\chi_{A}}{\|f\chi_{A}\|}$, where $\|f\chi_{A}\|=\int_{A}\|f(\omega)\|d\lambda$. We have that $f'\in S(L^{1}(A, \lambda, X))$. Now we define a map $\Theta:L^{\infty}(\Omega, \mu, X^*)\to L^{\infty}(A, \lambda, X^*)$ by $\Theta(g)=g\chi_{A}$, that is, restriction of $g$ to $A$. It easy to see that $\Theta$ is a bounded linear map from $L^{\infty}(\Omega, \mu, X^*)$ to $L^{\infty}(A, \lambda, X^*)$. We claim that $\Theta(\tilde{S}_{f})=\tilde{S}_{f'}$. Suppose $h\in \tilde{S}_{f}$, then we have that $h(\omega)(f(\omega))=\|f(\omega)\|$, $\mu$-a.e. So we get $$\Theta(h)(f\chi_{A})=\int_{A}h(\omega)(f(\omega))d\lambda=\int_{A}\|f(\omega)\|d\lambda=\|f\chi_{A}\|.$$ This implies $\Theta(h)\in \tilde{S}_{f'}$. Conversely, suppose $h\in \tilde{S}_{f'}$. Again as before, we have $h(\omega)(f(\omega))=\|f(\omega)\|$ $\lambda$-a.e on $A$. We fix some $g\in \tilde{S}_{f}$ and define $h'=h\chi_{A}+g\chi_{A^{c}}$. It is easy to see that $h'\in \tilde{S}_{f}$ and $\Theta(h')=h'\chi_{A}=h$. So we get $h\in \Theta(\tilde{S}_{f})$. This establishes the claim. As $\Theta$ is a bounded linear map and $\tilde{S}_{f}$ is weakly compact, we have that $\tilde{S}_{f'}$ is also weakly compact. Since $\Theta(h_{1}), \Theta(h_{2})\in \tilde{S}_{f'}$ and $\|\Theta(h_{1})-\Theta(h_{2})\|_{\infty}\geq \epsilon$ $\lambda$-a.e on $A$ and $\lambda(A)=1$, we can use an argument similar to the one used in the first part to obtain a contradiction to the weak compactness of $\tilde{S}_{f'}$. Therefore, in either case, we conclude that $\tilde{S}_{f}$ is a singleton. This completes the proof.
\end{proof}
The next corollary is an immediate consequence of the above theorem.
\begin{cor}
     Let $f\in S(L^{1}(\mu, X))$ such that $\tilde{S}_{f}$ is a non-empty norm compact subset of $L^{\infty}({\mu, X^*})$. Then $\tilde{S}_{f}$ is a singleton.
\end{cor}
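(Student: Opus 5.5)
The corollary should follow directly from Theorem~\ref{T36}, since norm compactness is a stronger hypothesis than weak compactness. The plan is to show that a non-empty norm compact $\tilde{S}_{f}$ is weakly compact in $L^{\infty}(\mu, X^*)$, and then apply Theorem~\ref{T36}.

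For the first step, let $\tilde{S}_{f}$ be norm compact and take a net $\{g_{\alpha}\}$ in $\tilde{S}_{f}$. By norm compactness it has a subnet converging in norm to some $g\in \tilde{S}_{f}$. Every $\phi\in L^{\infty}(\mu, X^*)^*$ is norm continuous, so $\phi(g_{\alpha})\to\phi(g)$ along this subnet, and the subnet therefore converges weakly to $g$. Equivalently, the identity map $(\tilde{S}_{f},\|.\|)\to(\tilde{S}_{f},\textnormal{w})$ is continuous, and continuous images of compact sets are compact. Hence $\tilde{S}_{f}$ is a non-empty weakly compact subset of $L^{\infty}(\mu, X^*)$.

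For the second step, Theorem~\ref{T36} now applies verbatim and gives that $\tilde{S}_{f}$ is a singleton.

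There is no genuine obstacle. The only point to check is that the weak topology in Theorem~\ref{T36} is the one coming from $L^{\infty}(\mu, X^*)$ itself, so that the norm-to-weak continuity above is taken in the same ambient space. Alternatively, one could rerun the convex-combination argument in the proof of Theorem~\ref{T36}. The functions $h_{n}'$ converge w$^*$ to $h_{1}$, so norm compactness would force a subnet to converge in norm to $h_{1}$. But $\|h_{n}'-h_{1}\|_{\infty}\geq\epsilon$ because $h_{n}'=h_{2}$ on the set $Z_{m}$ of positive measure for $m>n$, which gives a contradiction directly. The first route is simpler, so I would use it.
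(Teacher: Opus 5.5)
Your proposal is correct and matches the paper's proof, which notes that norm compactness implies weak compactness and then applies Theorem~\ref{T36}. Your argument via continuity of the identity map from the norm topology to the weak topology spells out the standard fact the paper uses implicitly.
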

\begin{proof}
    Since $\tilde{S}_{f}$ is norm compact and norm compactness implies weak compactness, so the conclusion follows from Theorem~\ref{T36}.
\end{proof}
If $X^{*}$ has the RNP, then $L^{1}(\mu, X)^*=L^{\infty}(\mu, X^{*})$. In this setting, the two notions of state space that we have been considering, namely $\tilde{S}_{f}$ and $S_{f}$, actually coincide. Thus, for every $f \in S(L^{1}(\mu, X))$, we have $\tilde{S}_{f} = S_{f}$, and in particular, this set is always non-empty. 
The following corollary highlights this special situation as a direct consequence of Theorem~\ref{T36}. It also provides an answer to Problem~\ref{P1}, which was left open in \cite{SDD}.

\begin{cor}\label{C37}
    Let $X$ be a Banach space such that $X^*$ has the RNP and let $f \in S(L^{1}(\mu, X))$. If the set $S_{f}$ is either weakly compact or norm compact, then $f$ is a smooth point, that is, $S_{f}$ is a singleton.

\end{cor}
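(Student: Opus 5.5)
The plan is to reduce Corollary~\ref{C37} to Theorem~\ref{T36} by identifying $S_f$ with $\tilde{S}_f$. Since $X^*$ has the RNP (with respect to $\mu$), the cited result \cite[Theorem~IV.1.1]{DU} gives the isometric identification $L^{1}(\mu, X)^*=L^{\infty}(\mu, X^{*})$. Under this identification the pairing is $g(f)=\int_{\Omega} g(\omega)(f(\omega))\,d\mu(\omega)$. Hence
\[
S_f=\{g\in L^{\infty}(\mu,X^*) : \|g\|_\infty = 1,\ g(f)=1\}.
\]

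First I check that $S_f=\tilde{S}_f$. The inclusion $S_f\subseteq \tilde{S}_f$ is immediate. Conversely, if $g\in\tilde{S}_f$, then $\|g\|_\infty\le 1$ and $1=g(f)\le \|g\|_\infty\|f\|_1=\|g\|_\infty$, so $\|g\|_\infty=1$ and $g\in S_f$. By the Hahn--Banach theorem $S_f$ is non-empty, and therefore so is $\tilde{S}_f$.

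Next I check that the topologies match. The weak topology of $L^{1}(\mu,X)^*$ and that of $L^{\infty}(\mu,X^*)$ coincide under the isometric identification. So if $S_f$ is weakly compact, then $\tilde{S}_f$ is a non-empty weakly compact subset of $L^{\infty}(\mu,X^*)$, and Theorem~\ref{T36} shows it is a singleton. If instead $S_f$ is norm compact, it is in particular weakly compact, because the identity from the norm topology to the weak topology is continuous. So the same conclusion follows, exactly as in the preceding corollary.

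Finally, $S_f$ being a singleton means exactly that $f$ has a unique norming functional, i.e.\ $f$ is a smooth point of $L^{1}(\mu,X)$. The main obstacle is only the bookkeeping in the identification $L^{1}(\mu,X)^*=L^{\infty}(\mu,X^*)$: one must make sure that the duality pairing and the norm agree, so that state spaces and weak topologies transfer. This is supplied by \cite{DU}.
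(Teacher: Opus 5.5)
Your proposal is correct and follows the paper's route. Under the RNP one has $L^{1}(\mu,X)^*=L^{\infty}(\mu,X^*)$, so $S_f=\tilde{S}_f$ is non-empty, and Theorem~\ref{T36}, together with the fact that norm compactness implies weak compactness as in the preceding corollary, shows that $S_f$ is a singleton. The only difference is that you spell out the two inclusions between $S_f$ and $\tilde{S}_f$ and the agreement of the weak topologies, which the paper takes for granted.
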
 
From \cite[Theorem~III.3.1]{DU}, it is well known that every separable dual space has the RNP. Let $X^*$ be a separable Banach space. Consequently, $X$ is a separable Banach space. It is known (see \cite{Ho}) that the set of all smooth points forms a $G_{\delta}$ subset of $S(X)$. Since the weak compactness or norm compactness of the state spaces coincide with the smoothness in the space $L^{1}(\mu, X)$. We arrive at the following corollary. 
 
\begin{cor}
Let $X^*, L^{1}(\mu)$ be separable spaces. Then the set of all $f\in L^{1}(\mu, X)_{1}$ such that $S_{f}$ is either weakly compact or norm compact forms a dense $G_{\delta}$ subset of $L^{1}(\mu, X)_{1}$.
\end{cor}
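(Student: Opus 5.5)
The plan is to identify the set in question with the set of smooth points of $L^{1}(\mu, X)_{1}$, or more precisely of the sphere $S(L^{1}(\mu,X))$, and then show that this set is a dense $G_{\delta}$.

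\textbf{Step 1 (identification).} Since $X^*$ is separable, it has the RNP by \cite[Theorem~III.3.1]{DU}. Hence $L^{1}(\mu,X)^*=L^{\infty}(\mu,X^*)$ and $\tilde S_f=S_f$ for every $f\in S(L^{1}(\mu,X))$. By Corollary~\ref{C37}, if $S_f$ is weakly or norm compact, then $f$ is smooth. Conversely, if $f$ is smooth, then $S_f$ is a singleton, which is trivially norm (hence weakly) compact. So the set in the statement is exactly the set of smooth points of $S(L^{1}(\mu,X))$.

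I will read the statement as being about the unit sphere, since $S_f$ is only defined for $\|f\|=1$. If the whole ball is intended, one can pass to the ball via the radial retraction $f\mapsto f/\|f\|$; the preimage of a dense $G_\delta$ of the sphere under this open continuous map on $L^{1}(\mu,X)_1\setminus\{0\}$ is again a dense $G_\delta$. I expect this point of interpretation to cause the most friction.

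\textbf{Step 2 (separability).} $X$ is separable because $X^*$ is. I then need $L^{1}(\mu,X)$ to be separable. Simple functions $\sum x_i\chi_{E_i}$ are dense in it, and using a countable dense subset of $X$ together with a countable family of sets that is dense in the measure algebra, we obtain a countable dense set. Such a family of sets exists because $L^{1}(\mu)$ is separable.

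\textbf{Step 3 ($G_\delta$ and density).} For a separable Banach space $E$, the set of smooth points of $S(E)$ is a $G_\delta$ \cite{Ho}. Concretely, $f$ is smooth iff for every $n$ there is $\delta>0$ with
\[
\|f+\delta y\|+\|f-\delta y\|-2<\delta/n
\]
for all $y$ in a fixed countable dense subset of the unit ball. Each such condition defines an open set, so we get a countable intersection of open sets.

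Density follows from Mazur's theorem: in a separable Banach space the norm is Gâteaux differentiable on a dense $G_\delta$ subset. Restricting to the sphere through the homogeneity of the norm gives density of the smooth points in $S(E)$.

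\textbf{Conclusion and main obstacle.} Applying Steps 2 and 3 with $E=L^{1}(\mu,X)$ yields the corollary. The main obstacle is precisely the density claim, which is not literally stated in \cite{Ho}. I would invoke Mazur's density theorem for separable spaces and verify carefully that Gâteaux differentiability of the norm at $f$ coincides with $S_f$ being a singleton.
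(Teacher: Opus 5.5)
Your proposal is correct and follows the paper's route: the RNP from separability of $X^*$, Corollary~\ref{C37} together with the trivial converse to identify the set with the smooth points, separability of $L^{1}(\mu,X)$, and Mazur's theorem; your handling of sphere versus ball is more careful than the paper's. One slip in Step~3: $\delta$ must be allowed to depend on $y$ (for every $n$ and every $y$ in the countable dense set there is $\delta>0$, intersecting over both $n$ and $y$), because with $\delta$ uniform in $y$ as written the condition characterizes Fr\'echet smoothness, which never occurs in $L^{1}$ of a non-atomic measure, so that set would be empty.
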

\begin{proof}
    Since $X^*, L^{1}(\mu)$ are both separable, we have that $L^{1}(\mu, X)$ is a separable space (see, \cite[Pg~228]{DU}) and $X^*$ has the RNP(see \cite[Pg~79]{DU} ). Now by Mazur's theorem (see \cite{Ho}), we know that the set of all smooth points of $L^{1}(\mu, X)_{1}$ is a dense $G_{\delta}$ subset of $S(L^{1}(\mu, X))$. Hence the conclusion follows from corollary~\ref{C37}.
\end{proof}
\begin{rem}\label{R37}
As noted earlier, the set $\tilde{S}_{f}$ may be an empty set. Therefore, it is a natural and interesting problem to determine the cases in which $\tilde{S}_{f}$ is non-empty. It is easy to see that $\tilde{S}_{f}$ is non-empty, when $f$ is a simple function.
\end{rem} 

\subsection{The Unit Balls $L^{\infty}(\mu, X^{*})_{1}$ vs.~$L^{1}(\mu, X)^{*}_{1}$}
Let $Y\subseteq X$ be a closed subspace and let $M\subseteq X_{1}^*$. We say that $M$ is a norming subset for $Y$ if for each $y\in Y$ we have $$\|y\|=\textnormal{sup}\{|m^*(y)|:m^*\in M\}.$$ We begin with the following general lemma, which is well known. For the sake of completeness, we include its proof.

\begin{lem}\label{L310}
    Let $Y\subseteq X$ be a dense subspace and let $M\subseteq X_{1}^*$ be a norming subset for $Y$. Then for each $x\in X$, we have  
	\begin{align}
	\|x\|=\textnormal{sup}\{|m^*(x)|:m^*\in M\}.
	\end{align}
\end{lem}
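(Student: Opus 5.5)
The plan is to prove two inequalities and combine them. Fix $x\in X$ and set $s(x)=\sup\{|m^*(x)|:m^*\in M\}$. Since $M\subseteq X_{1}^*$, every $m^*\in M$ satisfies $|m^*(x)|\leq \|m^*\|\|x\|\leq \|x\|$. This gives $s(x)\leq \|x\|$ immediately, so the content of the lemma is the reverse inequality $\|x\|\leq s(x)$.

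For the reverse inequality I will use density of $Y$. Choose a sequence $(y_n)\subseteq Y$ with $\|y_n-x\|\to 0$. I will show that the functional $s$ is $1$-Lipschitz on $X$. For any $u,v\in X$ and $m^*\in M$,
\[
|m^*(u)|\leq |m^*(v)|+|m^*(u-v)|\leq |m^*(v)|+\|u-v\|.
\]
Taking the supremum over $m^*\in M$ gives $s(u)\leq s(v)+\|u-v\|$, and by symmetry $|s(u)-s(v)|\leq \|u-v\|$. Because $M$ is norming for $Y$, we have $s(y_n)=\|y_n\|$ for each $n$. Letting $n\to\infty$, the left side tends to $s(x)$ by the Lipschitz estimate, and the right side tends to $\|x\|$ by continuity of the norm. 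Hence $s(x)=\|x\|$.

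There is no genuine obstacle here. The only point needing care is that the supremum defining $s$ is finite and continuous in $x$, which is exactly what the uniform bound $\|m^*\|\leq 1$ for $m^*\in M$ provides. Without that bound, passing the norming property from $Y$ to its closure could fail. An alternative route would be an $\varepsilon$-argument: pick $y\in Y$ with $\|x-y\|<\varepsilon$, then $m^*\in M$ with $|m^*(y)|>\|y\|-\varepsilon$, and conclude $|m^*(x)|>\|x\|-3\varepsilon$. This is the same argument written out pointwise.
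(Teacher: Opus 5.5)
Your proof is correct and follows essentially the same route as the paper: first the trivial bound from $\|m^*\|\le 1$, then approximating $x$ by $y_n\in Y$ and using $|m^*(y_n)|\le |m^*(x)|+\|y_n-x\|$ to pass the norming property to the limit. Phrasing this as the $1$-Lipschitz property of $s$ is just a tidier way of writing the paper's $\varepsilon$-argument.
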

\begin{proof}
Let $x\in X$. As right hand side of the equation~(5) is always less than or equal to the left hand side, It is enough to show that $$\|x\|\leq\textnormal{sup}\{|m^*(x)|:m^*\in M\}.$$ Since $Y$ is dense in $X$, there exists a sequence of the elements of $Y$, say, $\{y_{n}\}_{n\in \mathbb{N}}$ such that $y_{n}\to x$ in norm. Consequently, we have that $\|y_{n}\|\to \|x\|$. By the hypothesis, each $y_{n}$ satisfies equation~(5). Thus $\|y_{n}\|=\textnormal{sup}\{|m^*(y_{n})|:m^*\in M\}$. Fix $\epsilon>0$ and let $K\in \mathbb{N}$ be such that $\|y_{n}-x\|<\epsilon$ for all $n\geq K$. Combining all these things together, we get that
\begin{align*}
    \|x\|&=\textnormal{lim}_{n\to \infty}\|y_n\|\\
    &=\textnormal{lim}_{n\to \infty}\textnormal{sup}\{|m^*(y_{n})|:m^*\in M\}\\
	\end{align*}
	By Triangle inequality, for each $m^*\in M$, we have 
	\begin{align*}
	|m^*(y_{n})|&\leq\|m^*\|\|(y_{n}-x)\|+|m^*(x)|\\
    &\leq \epsilon+|m^*(x)|\\
\end{align*}
 for each $n\geq K$.Thus,
	$$\|x\|=\textnormal{lim}_{n\to \infty}\textnormal{sup}\{|m^*(y_{n})|:m^*\in M\}\leq \epsilon+ \textnormal{sup}\{|m^*(x)|:m^*\in M\}$$
Since $\epsilon$ is arbitrary, it gives the conclusion. In addition, As a consequence of Hahn-Banach separation theorem, we have that the subset $M$ is w$^*$-dense in $X_{1}^*$.
\end{proof}
Note that $L^{\infty}(\mu, X^*)$ is isometrically isomorphic to a closed subspace of the dual space $L^{1}(\mu, X)^*$. We shall use this fact, together with Lemma~\ref{L310}, to establish the following theorem.
\begin{thm}\label{T312}
    Let the set of all \textnormal{w}$^*$-\textnormal{w} PC's in the dual unit ball $L^{1}(\mu, X)_{1}^*$ be a weakly dense subset of $L^{1}(\mu, X)_{1}^*$. Then $X^*$ has the RNP.
\end{thm}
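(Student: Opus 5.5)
Write $E=L^{1}(\mu, X)$ and regard $L^{\infty}(\mu, X^*)$ as a norm closed subspace of $E^*$ via the canonical isometric embedding. The plan is to show that every w$^*$-w PC of $E_{1}^*$ lies in $L^{\infty}(\mu, X^*)_{1}$. Then the weak density hypothesis forces $L^{\infty}(\mu, X^*)_{1}$ to be weakly dense in $E_{1}^*$. Since $L^{\infty}(\mu, X^*)_{1}$ is convex and norm closed, it is weakly closed by Mazur's theorem (Theorem~\ref{C31}). Hence $L^{\infty}(\mu, X^*)_{1}=E_{1}^*$, that is, $L^{1}(\mu, X)^*=L^{\infty}(\mu, X^*)$. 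By the remark in the introduction (the duality $L^{1}(\mu,X)^*=L^{\infty}(\mu,X^*)$ holds exactly when $X^*$ has the RNP with respect to $\mu$, \cite[Theorem~IV.1.1]{DU} and its converse), this gives that $X^*$ has the RNP.

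\textbf{Key step: w$^*$-w PC's lie in $L^{\infty}(\mu, X^*)_{1}$.} Fix a w$^*$-w PC $\phi\in E_{1}^*$ and take $M=L^{\infty}(\mu, X^*)_{1}$. The set $M$ is norming for the dense subspace of simple functions $Y\subseteq E$: for a simple $f=\sum_j x_j\chi_{A_j}$, choose norming functionals $x_j^*\in S_{x_j/\|x_j\|}$ and set $g=\sum_j x_j^*\chi_{A_j}$. By Lemma~\ref{L310}, $M$ is norming for all of $E$, and so, by the Hahn--Banach remark at the end of its proof, $M$ is w$^*$-dense in $E_{1}^*$. Hence there is a net $(g_\alpha)\subseteq M$ with $g_\alpha\to\phi$ in the w$^*$-topology. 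Because $\phi$ is a w$^*$-w PC, $g_\alpha\to\phi$ weakly, so $\phi$ lies in the weak closure of $M$. As $M$ is convex and norm closed, Mazur's theorem (Theorem~\ref{C31}) gives that this weak closure is $M$ itself, and therefore $\phi\in M$.

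\textbf{Conclusion.} The set $W$ of w$^*$-w PC's is contained in $M$ and, by hypothesis, is weakly dense in $E_{1}^*$. Since $M$ is weakly closed, $E_{1}^*=\overline{W}^{\textnormal{w}}\subseteq M$. By homogeneity this gives $E^*=L^{\infty}(\mu,X^*)$, which is the RNP criterion above.

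\textbf{Main obstacle.} I expect the hardest part to be justifying the converse direction of the duality theorem: that $L^{1}(\mu,X)^*=L^{\infty}(\mu,X^*)$ forces $X^*$ to have the RNP. I would first cite \cite[Theorem~IV.1.1]{DU} together with its standard converse. If a self-contained argument is needed, I would take a $\mu$-continuous $X^*$-valued measure $G$ of bounded variation and define the functional $f\mapsto\int f\,dG$ on simple functions. This functional extends to an element of $L^{1}(\mu,X)^*$, which by the identification is represented by some $g\in L^{\infty}$. Comparing the two on the functions $x\chi_A$ shows that $g$ is a Radon--Nikodým derivative of $G$. Since the measure space is non-atomic and RNP with respect to such a $\mu$ is equivalent to RNP (via \cite{DU}), this should suffice. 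The step would need care with the variation bound, which is handled by first localizing to sets where $|G|\le C\mu$.
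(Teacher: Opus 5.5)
Your proposal is correct and is essentially the paper's proof. Both arguments use the norming property of $L^{\infty}(\mu,X^*)_{1}$ on simple functions together with Lemma~\ref{L310} to get w$^*$-density. They then use Mazur's theorem to place every w$^*$-w PC in the weakly closed set $L^{\infty}(\mu,X^*)_{1}$, and the weak-density hypothesis to obtain $L^{\infty}(\mu,X^*)_{1}=L^{1}(\mu,X)^{*}_{1}$, concluding via the Diestel--Uhl duality theorem.

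The converse you flag as the main obstacle is already part of that theorem, which is an ``if and only if'' for the RNP with respect to $\mu$, so your self-contained sketch is unnecessary. Your remark that the RNP with respect to a non-atomic $\mu$ yields the full RNP does make explicit a step the paper leaves implicit.
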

\begin{proof}
    Let $M=L^{\infty}(\mu, X^*)_{1}$ and let $Y=\{s\in L^{1}(\mu, X):s \textnormal{ is a simple function }\}$. Now we claim that $M$ is a norming set for the subspace $Y$. Suppose $s\in L^{1}(\mu, X)$ is a simple function, say, $s=\sum_{i=1}^{k}x_{i}\chi_{E_{i}}$ for some $x_{i}\in X$ and measurable subsets $E_{i}$'s. For $1\leq i\leq k$, we choose $x_{i}^*\in X_{1}^*$ such that $x_{i}^*(x_{i})=\|x_{i}\|$. We consider $g=\sum_{i=1}^{k}x_{i}^*\chi_{E_{i}}.$ It is straightforward to check that $g\in L^{\infty}(\mu, X^*)_{1}$ and $g(s)=\|s\|$. Thus every element of $Y$ attain its norm on $M$. This establishes the claim. Since simple functions are dense in $L^{1}(\mu, X)$, by applying Lemma~\ref{L310} to our choice of $Y$ and $M$, we get that the dual unit ball $L^{\infty}(\mu, X^*)_{1}$ is w$^*$-dense in $L^{1}(\mu, X)_{1}^*$. Let $\Phi\in L^{1}(\mu, X)_{1}^*$ be a \textnormal{w}$^*$-\textnormal{w} PC. 
Since $L^{\infty}(\mu, X^*)_{1}$ is w$^*$-dense in $L^{1}(\mu, X)_{1}^*$, 
there exists a net $\{\Phi_{\alpha}\}_{\alpha\in J}\subseteq 
L^{\infty}(\mu, X^*)_{1}$ such that $\Phi_{\alpha}\to \Phi$ in the 
w$^*$-topology, and hence in the w-topology. 
It follows that $\Phi\in L^{\infty}(\mu, X^*)_{1}$, since it is weakly closed (see, Theorem~\ref{C31}). Thus \textnormal{w}$^*$-\textnormal{w} PC's of 
$L^{1}(\mu, X)_{1}^*$ must belong to the unit ball 
$L^{\infty}(\mu, X^*)_{1}$. By the hypothesis, we have that the set of all \textnormal{w}$^*$-\textnormal{w} PC's in the dual unit ball $L^{1}(\mu, X)_{1}^*$ is a weakly dense subset of $L^{1}(\mu, X)_{1}^*$. This implies that  $$L^{\infty}(\mu, X^*)_{1}=L^{1}(\mu, X)_{1}^*.$$ By a classical result (see \cite[Theorem~IV.1.1]{DU}) concerning the duality of $L^{1}(\mu, X)$, we get that $X^*$ has the RNP. This completes the proof.
\end{proof}

 We denote the canonical embedding of $X$ in to its bidual $X^{**}$ by identifying $X$ with its image under the map $J_{X}:X\to X^{**}$, where $J_{X}$ is the canonical embedding. We recall from the introduction that a point $x\in S(X)$ is said to be smooth if $S_{x}$ is a singleton. In \cite{SF}, the authors introduced another level of smoothness for smooth points, namely, \textbf{very smooth points}. A smooth point $x \in S(X)$ with $S_{x} = \{x^*\}$ is said to be \textbf{very smooth} if its canonical image in $X^{**}$ under the mapping $J_X$, that is, $J_X(x) \in X_{1}^{**}$, is itself a smooth point.
 It is easy to see that $x$ is a very smooth point in $S(X)$ if and only if $x^*\in S(X^*)$ has a unique norm preserving extension from $X$ to $X^{**}$. However, it is shown in \cite{GG} that a functional $x^*\in S(X^*)$ has a unique norm preserving extension from $X$ to $X^{**}$ if and only if it is a point of continuity of the map $I:(X_{1}^*, \textnormal{w}^*)\to (X_{1}^*, \textnormal{w})$. The following proposition addresses a non-trivial case in which the set $\tilde{S}_{f}$ is non-empty.

\begin{prop}
    Let $f\in L^{1}(\mu, X)_{1}$ be a very smooth point. Then it attains its norm uniquely on $L^{\infty}(\mu, X^*)_{1}$, that is, $\tilde{S}_{f}$ is a singleton.
\end{prop}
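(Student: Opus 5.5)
The plan is to show two things: $\tilde{S}_{f}$ has at most one element, and $\tilde{S}_{f}$ is non-empty. The second is the real content.

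\textbf{Uniqueness.} Write $L^{\infty}(\mu, X^*)\subseteq L^{1}(\mu, X)^*$ for the canonical isometric embedding. Every $g\in \tilde{S}_{f}$ satisfies $\|g\|\leq 1$ and $g(f)=\int g(\omega)(f(\omega))\,d\mu(\omega)=1$. Hence $g\in S_{f}$, that is, $\tilde{S}_{f}\subseteq S_{f}$. Since $f$ is very smooth, it is in particular smooth, so $S_{f}=\{\Phi\}$ for a single functional $\Phi\in S(L^{1}(\mu, X)^*)$. So $\tilde{S}_{f}$ contains at most one point, and it suffices to show that $\Phi\in L^{\infty}(\mu, X^*)_{1}$.

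\textbf{Non-emptiness.} First, translate very smoothness into a continuity statement. By the remark preceding the proposition, very smoothness of $f$ means that $\Phi$ has a unique norm preserving extension from $L^{1}(\mu, X)$ to $L^{1}(\mu, X)^{**}$. By the result of \cite{GG}, this is equivalent to $\Phi$ being a point of continuity of
\[
I:(L^{1}(\mu, X)_{1}^*, \textnormal{w}^*)\to (L^{1}(\mu, X)_{1}^*, \textnormal{w}).
\]
Next, repeat the argument from the proof of Theorem~\ref{T312}. Take $M=L^{\infty}(\mu, X^*)_{1}$ and let $Y$ be the simple functions. Every simple function attains its norm on $M$, so $M$ is norming for $Y$. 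Since $Y$ is dense, Lemma~\ref{L310} and the Hahn--Banach separation theorem show that $L^{\infty}(\mu, X^*)_{1}$ is w$^*$-dense in $L^{1}(\mu, X)_{1}^*$. Choose a net $\{\Phi_{\alpha}\}\subseteq L^{\infty}(\mu, X^*)_{1}$ with $\Phi_{\alpha}\to\Phi$ in the w$^*$-topology. Since $\Phi$ is a w$^*$-w PC of the dual ball, $\Phi_{\alpha}\to \Phi$ weakly in $L^{1}(\mu, X)^*$. The set $L^{\infty}(\mu, X^*)_{1}$ is convex and norm closed, because it is the unit ball of a closed subspace. By Theorem~\ref{C31} it is therefore weakly closed, and so $\Phi\in L^{\infty}(\mu, X^*)_{1}$. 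Finally, $\Phi(f)=1$, so $\Phi\in \tilde{S}_{f}$, and hence $\tilde{S}_{f}=\{\Phi\}$.

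\textbf{Main obstacle.} The step needing care is the invocation of \cite{GG}. One must check that the continuity it yields is at $\Phi$ on the whole dual ball $L^{1}(\mu, X)_{1}^*$, not merely on $S_{f}$. The approximating net lives in $L^{\infty}(\mu, X^*)_{1}$ and need not lie in $S_{f}$, so continuity on $S_{f}$ alone would not suffice. I expect the unique-extension characterization to give exactly the full-ball statement, which is what the argument needs. Everything else is a direct reuse of Lemma~\ref{L310}, Theorem~\ref{C31}, and the proof of Theorem~\ref{T312}.
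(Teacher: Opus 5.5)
Your proposal is correct and is essentially the paper's proof. By \cite{GG}, very smoothness of $f$ makes the unique support functional $\Phi$ a \textnormal{w}$^*$-\textnormal{w} PC of the whole dual ball $L^{1}(\mu, X)_{1}^*$, so the obstacle you flagged is not an issue; then \textnormal{w}$^*$-density of $L^{\infty}(\mu, X^*)_{1}$ and Mazur's theorem give $\Phi\in L^{\infty}(\mu, X^*)_{1}$, and your uniqueness step $\tilde{S}_{f}\subseteq S_{f}$ spells out what the paper leaves implicit.
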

\begin{proof}
    Let $f\in L^{1}(\mu, X)_{1}$ be a very smooth point and let $S_{f}=\{\Lambda\}$. In this case, we have that $\Lambda\in L^{1}(\mu, X)_{1}^*$ is a w$^*$-w PC. Since $L^{\infty}(\mu, X^*)_{1}$ is w$^*$-dense in $L^{1}(\mu, X)_{1}^*$, we get that $\Lambda\in L^{\infty}(\mu, X^*)_{1}$. This completes the proof.
\end{proof}
    We recall from \cite{SF} that a Banach space $X$ is said to \textbf{Hahn-Banach smooth} if in $X^{***}$, $\|x^{*}+x^{\perp}\|=\|x^*\|=1$ implies that $x^{\perp}=0$ in other words $x^*\in X^{***}$ is the unique Hahn-Banach extension of $x^*|_{X}$. We now state the following known result, which establishes a connection between separable Banach spaces and the separability of their duals. We provide a short proof for the sake of completeness.
\begin{prop}\label{C313}
    Let $X$ be a Hahn-Banach smooth space. If $X$ is separable, Then $X^*$ is a separable space.
\end{prop}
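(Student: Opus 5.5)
We will prove this through the link, recalled just before the statement, between unique Hahn--Banach extensions and w$^*$-w points of continuity, and then invoke the Bishop--Phelps theorem together with a density argument.

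\emph{Step 1: every functional in $S(X^*)$ is a w$^*$-w PC of $X_1^*$.} By Hahn--Banach smoothness, every $x^*\in S(X^*)$ has a unique norm preserving extension from $X$ to $X^{**}$, namely its canonical image in $X^{***}$. By the result of \cite{GG}, each such $x^*$ is then a point of continuity of $I:(X_1^*,\textnormal{w}^*)\to(X_1^*,\textnormal{w})$. After rescaling, every point of $X_1^*\setminus\{0\}$ of norm one is such a point.

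\emph{Step 2: density of norm-attaining functionals.} Since $X$ is separable, fix a norm dense sequence $\{x_n\}$ in $S(X)$. For each $n$ choose $x_n^*\in S_{x_n}$; this is possible by Hahn--Banach. Let $D$ denote the norm closed linear span of $\{x_n^*\}$, which is a separable subspace of $X^*$. The goal is to show that $D=X^*$. By the Bishop--Phelps theorem, norm-attaining functionals are norm dense in $X^*$, so it suffices to show that every $x^*\in S(X^*)$ attaining its norm at some $x\in S(X)$ lies in $D$.

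\emph{Step 3: the main obstacle.} Take $x_{n_k}\to x$ in norm. I would pass to a subnet of $\{x_{n_k}^*\}$ that converges w$^*$ to some $y^*\in X_1^*$. Then $y^*(x)=\lim x_{n_k}^*(x_{n_k})=1$, so $y^*\in S_x$.

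The difficulty is that $S_x$ need not be a singleton, so we cannot conclude directly that $y^*=x^*$. To get around this, I would use the full strength of Hahn--Banach smoothness: it implies that $X$ is very smooth, hence smooth. The known fact here is that Hahn--Banach smoothness of $X$ forces $X$ to be smooth, since a pair of distinct supporting functionals would produce distinct norm preserving extensions in $X^{***}$. Accepting this, $S_x=\{x^*\}$, and therefore $y^*=x^*$.

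By Step 1, $x^*$ is a w$^*$-w PC of $X_1^*$. Hence $x_{n_k}^*\to x^*$ weakly along the subnet. Consequently $x^*$ lies in the weak closure of $D$, which coincides with $D$ by Mazur's theorem (Theorem~\ref{C31}), since $D$ is a closed subspace. Therefore $D$ contains all norm-attaining functionals, so $D=X^*$, and $X^*$ is separable.

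If the smoothness step proves troublesome, there is an alternative route that avoids it. One can argue directly that every $x^*\in S(X^*)$ is a w$^*$-w PC (Step 1), and that the set of norm-attaining functionals is w$^*$-dense in $X_1^*$. Separability of $X$ makes $X_1^*$ w$^*$-metrizable, hence w$^*$-separable. Choose a countable w$^*$-dense subset of $S(X^*)$. At each point of $S(X^*)$, w$^*$-convergence of a sequence from this subset implies weak convergence. Hence the weak closure, and so by Mazur the norm closure, of the span of this countable set contains $S(X^*)$, which again yields separability.
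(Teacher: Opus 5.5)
\textbf{Main route (Steps 2--3): a genuine gap.} Steps 2--3 fail at the claim that Hahn--Banach smoothness forces $X$ to be smooth. That claim is false. Take $c_0$. It is Hahn--Banach smooth: it is an M-ideal in $\ell^\infty$, so $(\ell^\infty)^*=\ell^1\oplus_1 c_0^\perp$ and $\|x^*+x^\perp\|=\|x^*\|+\|x^\perp\|$. Yet $x=e_1+e_2\in S(c_0)$ has $S_x=\{te_1^*+(1-t)e_2^*: t\in[0,1]\}$.

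Your heuristic confuses two different uniqueness properties. Smoothness asks for a unique supporting functional at a point of $X$. Hahn--Banach smoothness asks that each single functional have a unique norm-preserving extension to $X^{**}$. Two distinct supporting functionals of $x$ simply have two distinct extensions, each unique, and this contradicts nothing.

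Without smoothness, Step 3 only gives you some $y^*\in S_x$, not the prescribed $x^*$. In $c_0$, let $x_n$ be the normalization of $(1+\frac{1}{n})e_1+e_2$. Then $x_n\to e_1+e_2$ and $x_n^*=e_1^*$ is forced, so the cluster point is $e_1^*$ even if you wanted $x^*=e_2^*$. Nothing in your argument places an arbitrary norm-attaining functional in $D$, so the Bishop--Phelps reduction does not close the proof.

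\textbf{Fallback route: correct, and essentially the paper's proof.} The paper argues as follows:
\begin{itemize}
\item separability of $X$ makes $(X_1^*,\textnormal{w}^*)$ compact metrizable, so it picks a countable w$^*$-dense $\{x_n^*\}\subseteq S(X^*)$;
\item Hahn--Banach smoothness, via the quoted characterization through unique norm-preserving extensions, makes every point of $S(X^*)$ a w$^*$-w PC, which upgrades w$^*$-approximation to weak approximation;
\item Mazur's theorem converts the weakly closed convex hull into the norm closed one.
\end{itemize}
You use linear spans instead of convex hulls, which is equivalent.

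Make the fallback the proof, and state explicitly:
\begin{itemize}
\item the countable set is dense in $S(X^*)$ for the relative w$^*$-topology, which holds because every subset of a compact metrizable space is separable;
\item metrizability lets you use sequences;
\item $S(X^*)$ spans $X^*$.
\end{itemize}
Drop Steps 2--3 and the remark about w$^*$-density of norm-attaining functionals, which plays no role. Also delete the sentence about ``rescaling'': it only restates that points of $S(X^*)$ are w$^*$-w PC's, and only those points are needed.
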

\begin{proof} 
    Since $X$ is separable and the dual unit ball $X_{1}^*$ is w$^*$-compact, by \cite[Theorem~3.16]{RW}, we get that $(X_{1}^*, \textnormal{w}^*)$ is a compact metric space. It is separable. We can choose a countable w$^*$-dense set $\{x_{n}^*\}_{n\in\mathbb{N}}\subseteq S(X^*)$ such that $X_{1}^*=\overline{co}^{\textnormal{w}^*}\{x_{n}^*:n\in \mathbb{N}\}$. Since $X$ is a Hahn-Banach smooth space, we have that each point on $S(X^*)$ is a w$^*$-w PC. By Mazur's theorem (see Theorem~\ref{C31}), we get $$X_{1}^*=\overline{co}^{\textnormal{w}}\{x_{n}^*:n\in \mathbb{N}\}=\overline{co}^{\|.\|}\{x_{n}^*:n\in \mathbb{N}\}.$$ Thus $X^*$ ia a separable space.
\end{proof}

The following theorem is an interesting application of Theorem~\ref{T36} and Proposition~\ref{C313}.  

\begin{thm}
   Let $X$ be a Hahn-Banach smooth separable space and let $f\in S(L^{1}(\mu, X))$ be such that $S_{f}$ is weakly compact. Then $\frac{f(\omega)}{\|f(\omega)\|}\in S(X)$ is a very smooth point a.e.
\end{thm}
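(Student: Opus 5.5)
We first reduce to the setting of Theorem~\ref{T36}. Since $X$ is Hahn-Banach smooth and separable, Proposition~\ref{C313} gives that $X^*$ is separable. By \cite[Theorem~III.3.1]{DU}, $X^*$ then has the RNP, and so $L^{1}(\mu, X)^*=L^{\infty}(\mu, X^*)$. In particular $\tilde{S}_f=S_f$, and this set is non-empty. Since $S_f$ is weakly compact by hypothesis, Theorem~\ref{T36} (equivalently Corollary~\ref{C37}) shows that $S_f=\{g\}$ is a singleton, with $g\in L^{\infty}(\mu, X^*)_1$. By Remark~\ref{R33}, weak compactness also forces $f(\omega)\neq 0$ a.e., so $\frac{f(\omega)}{\|f(\omega)\|}$ is defined a.e.

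Next we show that $\frac{f(\omega)}{\|f(\omega)\|}$ is a smooth point of $S(X)$ for a.e.\ $\omega$. By Remark~\ref{R32}, $\frac{g(\omega)}{\|g(\omega)\|}\in S_{\frac{f(\omega)}{\|f(\omega)\|}}$ a.e.

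Suppose the set $B$ of $\omega$ where $S_{\frac{f(\omega)}{\|f(\omega)\|}}$ has more than one point has positive (outer) measure. We will then build a second element of $S_f$. Choose a countable norm-dense set $\{x_n^*\}$ in $X^*_1$; this uses separability of $X^*$. For $\omega\in B$ there is $n$ with $x_n^*\in S_{\frac{f(\omega)}{\|f(\omega)\|}}$ and $\|x_n^*-\frac{g(\omega)}{\|g(\omega)\|}\|>\delta$, up to approximation. More cleanly, the sets
\[
B_{n,k}=\Bigl\{\omega:\ \mathrm{Re}\,x_n^*(f(\omega))>\|f(\omega)\|(1-\tfrac1k),\ \bigl\|x_n^*-\tfrac{g(\omega)}{\|g(\omega)\|}\bigr\|>\delta\Bigr\}
\]
are measurable. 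The step I expect to be the main obstacle is turning such approximate norming functionals into exact elements of the state space on a set of positive measure. To handle it, I would use a measurable selection: the multifunction $\omega\mapsto S_{\frac{f(\omega)}{\|f(\omega)\|}}\setminus B(\frac{g(\omega)}{\|g(\omega)\|},\delta)$ has a measurable selection on some positive-measure set $E$, via the Kuratowski--Ryll-Nardzewski theorem applied in the Polish space $X^*$. Setting $g'=\|g\|\cdot(\text{selection})$ on $E$ and $g'=g$ off $E$ gives $g'\in S_f$ with $g'\neq g$, contradicting smoothness. Hence each $\frac{f(\omega)}{\|f(\omega)\|}$ is smooth a.e., with unique state $\frac{g(\omega)}{\|g(\omega)\|}$.

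Finally, a smooth point $x$ with $S_x=\{x^*\}$ is very smooth if and only if $x^*$ has a unique norm-preserving extension to $X^{**}$, which by \cite{GG} holds if and only if $x^*$ is a w$^*$-w PC of $X_1^*$. Since $X$ is Hahn-Banach smooth, every point of $S(X^*)$ is such a PC, as already used in the proof of Proposition~\ref{C313}. Therefore every smooth point of $S(X)$ is very smooth, and the conclusion follows for a.e.\ $\omega$.
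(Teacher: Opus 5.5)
Your proof matches the paper's at both ends. Proposition~\ref{C313} and the RNP give $\tilde{S}_f=S_f$, Theorem~\ref{T36} gives $S_f=\{g\}$, and Hahn--Banach smoothness together with \cite{GG} makes every smooth point of $S(X)$ very smooth. The difference is the middle step, that smoothness of $f$ implies smoothness of $\frac{f(\omega)}{\|f(\omega)\|}$ a.e. The paper simply quotes \cite[Theorem~2.3]{SDD}. You instead prove it by building a second element of $S_f$ from a measurable selection, using the separability of $X^*$ you already have.

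That route works, but the selection is only asserted, not checked. Kuratowski--Ryll-Nardzewski needs $F_\delta(\omega)=S_{\frac{f(\omega)}{\|f(\omega)\|}}\setminus B\bigl(\frac{g(\omega)}{\|g(\omega)\|},\delta\bigr)$ to be weakly measurable, and nonempty on a measurable set of positive measure; you verify neither. Both can be supplied as follows:
\begin{itemize}
\item The graph of $F_\delta$ is cut out by the Carath\'eodory functions $\|x^*\|$, $x^*(f(\omega))-\|f(\omega)\|$ and $\bigl\|x^*-\frac{g(\omega)}{\|g(\omega)\|}\bigr\|$, so it lies in $\Sigma\otimes\mathcal{B}(X^*)$ because $X^*$ is separable.
\item On the completion of $\mu$, the projection theorem makes each $E_k=\{\omega:F_{1/k}(\omega)\neq\emptyset\}$ measurable, and the non-smooth set equals $\bigcup_k E_k$ up to a null set.
\item Aumann's selection theorem then gives a measurable selection on some $E_k$ of positive measure. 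Your $g'$ is indeed a second element of $S_f$; for $p=1$ one has $\|g(\omega)\|=1$ a.e., so the factor $\|g\|$ is harmless.
\end{itemize}
The sets $B_{n,k}$ play no role. The claim that some $x_n^*$ lies exactly in the state space is false as stated, so drop both.

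If you want a self-contained argument without selection theorems, separability of $X$ (which is a hypothesis) is enough. Put $\tau(y,x)=\lim_{t\to0^+}\frac{\|y+tx\|-\|y\|}{t}$. Dominated convergence shows that the one-sided derivative of the $L^1$-norm at $f$ in the direction $\pm x\chi_\Omega$ is $\int\tau(f(\omega),\pm x)\,d\mu$. Smoothness of $f$ makes the sum of these two derivatives vanish. Since the integrand $\tau(f(\omega),x)+\tau(f(\omega),-x)$ is nonnegative, it is $0$ a.e. for $x$ in a countable dense subset of $X$. It is then $0$ for all $x$, because $\tau(y,\cdot)$ is $1$-Lipschitz, and this is exactly smoothness of $f(\omega)$.

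In short, the paper's citation is the shortest route. Your selection argument is self-contained but heavier and needs the measurability details above. The directional-derivative argument is self-contained and uses only separability of $X$.
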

\begin{proof}
    Since $X$ is a Hahn-Banach smooth separable space, by  Corollary~\ref{C313}, we have that $X^*$ is a separable space. So it has the RNP (see \cite[Theorem~III.3.1]{DU}). Therefore, we have that $\tilde{S}_{f}=S_{f}$. By Theorem~\ref{T36}, we get that $S_{f}$ is a singleton, that is, $f\in S(L^{1}(\mu, X))$ is a smooth point. It follows that $\frac{f(\omega)}{\|f(\omega)\|}\in S(X)$ is a smooth point a.e (see \cite[Theorem~2.3]{SDD}). Let $S_{f}=\{g\}$. By the hypothesis, since $X$ is Hahn-Banach smooth, we get that $g(\omega)\in S(X^*)$ is a w$^*$-w PC and $g(\omega)(f(\omega))=\|f(\omega)\|$ a.e. This implies that $\frac{f(\omega)}{\|f(\omega)\|}\in S(X)$ is a very smooth point a.e.
\end{proof}
The following example presents a situation in which it is shown that, for a function $f \in S(L^{1}(\mu, X))$,  
the condition that $\frac{f(\omega)}{\|f(\omega)\|}\in S(X)$ is very smooth a.e does not necessarily imply that $f$ itself is very smooth.

\begin{ex}
Let $X$ be a reflexive and smooth space and let $f\in L^{1}(\mu, X)_{1}$ be a smooth point with $S_{f}=\{g\}$. Since $X$ is a smooth space, we have that $\frac{f(\omega)}{\|f(\omega)\|}$ is smooth in $X$ for almost all $\omega\in \Omega$ with $S_{\frac{f(\omega)}{\|f(\omega)\|}}=\{g(\omega)\}$. As $X$ is reflexive, we have that $g(\omega)$ is a \textnormal{w}$^*$-\textnormal{w} PC of $S(X^*)$ a.e. Consequently, the point $\frac{f(\omega)}{\|f(\omega)\|}\in S(X)$ is very smooth a.e. By Levin's theorem (see \cite{LV}), we have the canonical decomposition $$L^{\infty}(\mu, X^*)^*=L^{1}(\mu, X)\bigoplus_{1}M$$ Here, $L^{1}(\mu, X)$ is canonically embedded into its bidual $L^{\infty}(\mu, X^*)^{*}$, and 
$M \subseteq L^{\infty}(\mu, X^*)^{*}$ denotes a closed subspace.
 This implies that $f\in L^{1}(\mu, X)_{1}$ is not a very smooth point (see, \cite[Proposition~23]{TS}). 

\end{ex}
\subsection{Open Problems: } In what follows, we present some open problems.

\begin{problem}
In Theorems~\ref{T4} and \ref{T4'}, we studied the \textnormal{w}$^{*}$-\textnormal{w} and \textnormal{w}$^{*}$-$\|.\|$ points of continuity of the state spaces.  
However, a characterization of \textnormal{w}$^{*}$-\textnormal{w} points of continuity of the dual unit ball $\ell^{q}(X^{*})_{1}$ is not known.  
In particular, even for a norm-attaining functional $(x_{i}^{*}) \in \ell^{q}(X^{*})$, the conclusion remains open.
\end{problem}

\begin{problem}
 In Theorem~\ref{T36}, we have presented a characterization of weakly compact state spaces in the space $L^{1}(\mu, X)$. A similar result is not known for the space $L^{p}(\mu, X)$, where $1<p<\infty$.
\end{problem}
 Motivated by our last theorem, namely, Theorem~\ref{T312}, we can ask the following question.
\begin{problem}
	If all the elements in $L^{p}(\mu, X)$ attain its norm on $L^{q}(\mu, X^*)_{1}$, then $X^*$ has the RNP?
\end{problem}

\vspace{2mm}
\noindent\small{\bf Conflict of interest:} The author declares that there is no conflict of interest.

\subsection*{Acknowledgements}
The author would like to thank Prof.~T.~S.~S.~R.~K.~Rao and Dr.~Priyanka Grover for their invaluable guidance, constant support, and inspiring mentorship throughout this work.  
The author is also grateful for their encouragement during the preparation of the thesis, of which this paper forms a part.  
The author would also like to thank the referee for the careful reading of the manuscript and for valuable suggestions that improved the presentation of the paper.  
Sincere thanks are due to the department for providing a stimulating academic environment that greatly facilitated this research.

\end{document}